\documentclass{amsart}

\usepackage[letterpaper,top=2.2cm,bottom=2.2cm,left=2.2cm,right=2.2cm]{geometry}
\usepackage[T1]{fontenc}
\usepackage[utf8]{inputenc}
\usepackage{amsmath,amssymb,amsthm,mathtools}
\usepackage{mathrsfs}
\usepackage{enumitem}
\usepackage{booktabs}
\usepackage{array}
\usepackage{graphicx}
\usepackage{verbatim}
\usepackage{algorithm}
\usepackage{algorithmicx}
\usepackage{algpseudocode}
\usepackage[colorlinks=true,linkcolor=blue,citecolor=blue,urlcolor=blue]{hyperref}

\newtheorem{theo}{Theorem}[section]
\newtheorem{prop}[theo]{Proposition}
\newtheorem{lem}[theo]{Lemma}
\newtheorem{cor}[theo]{Corollary}
\newtheorem{ass}[theo]{Assumption}

\theoremstyle{definition}
\newtheorem{defin}[theo]{Definition}

\theoremstyle{remark}
\newtheorem{rem}[theo]{Remark}

\newcommand{\R}{\mathbb{R}}
\newcommand{\Z}{\mathbb{Z}}
\newcommand{\N}{\mathbb{N}}
\newcommand{\E}{\mathbb{E}}
\newcommand{\PP}{\mathbb{P}}

\newcommand{\ind}{\mathbf{1}}
\newcommand{\dd}{\,\mathrm{d}}

\algnewcommand{\LeftComment}[1]{\Statex \(\triangleright\) #1}

\newcommand{\safeincludegraphics}[2][]{
  \IfFileExists{#2}{
    \includegraphics[#1]{#2}
  }{
    \fbox{
      \begin{minipage}[c][4cm][c]{0.85\linewidth}
      \centering
      Missing figure file:\\[0.2cm]
      \texttt{#2}
      \end{minipage}
    }
  }
}

\title[Perfect simulation for reset Hawkes processes]{Perfect simulation for interacting Hawkes processes with reset-induced variable length memory}

\author{Branda P.\,I. Goncalves and Lucien Mauffret}

\address{B.\,P.\,I. Goncalves: Universit\'e Paris Est Cr\'eteil, 61 avenue du G\'en\'eral de Gaulle, Cr\'eteil, France}
\email{branda.goncalves@u-pec.fr}

\address{L. Mauffret: Universit\'e Paris 1 Panth\'eon-Sorbonne, 90 rue de Tolbiac, 75013 Paris, France}
\email{lucien.mauffret@univ-paris1.fr}

\date{}

\begin{document}

\begin{abstract}
We study a class of interacting nonlinear Hawkes point processes on the integer lattice in which each component is reset after its own jumps. The intensity of a component depends on the post-reset activity of its nearest neighbours, which produces a variable-length memory structure.

We develop a graphical construction based on a dominating Poisson environment and introduce the clan of ancestors of a space-time point. The clan is the finite or infinite backward exploration of all events whose acceptance decisions may influence the target value. Our main result is a constructive subcriticality criterion: if the sure-event rate exceeds the candidate-event rate, equivalently if $\beta_*/(\beta^*-\beta_*)>1$, then the clan is almost surely finite. The proof is based on an explicit dominating branching process associated with the genealogical structure of the exploration.

The finiteness of the clan yields a measurable local construction of the stationary regime. We prove existence and uniqueness of the stationary solution by a coupling argument and obtain an exact backward--forward perfect simulation algorithm. The algorithm terminates almost surely in the subcritical regime and returns exact samples from the stationary law. Numerical experiments, together with reproducibility details and R code, illustrate the finite-clan mechanism and the computational behaviour near the theoretical threshold.
\end{abstract}

\maketitle

\bigskip
\noindent\textit{Keywords:} nonlinear Hawkes processes; interacting point processes; reset; variable length memory; clan of ancestors; perfect simulation; graphical construction; stationary process.

\smallskip
\noindent\textit{MSC 2020:} 60G55; 60J35; 60K35; 60K37; 65C05.

\section{Introduction}
\label{sec:intro}

\subsection{Motivation and framework}

Hawkes processes, introduced by Hawkes \cite{Hawkes}, are point processes whose intensity depends on past events. They provide a natural modelling framework for interacting systems with excitation or inhibition effects, with applications ranging from neuroscience to finance.
Stability and existence questions for nonlinear Hawkes processes have been extensively studied in the literature, see for instance Brémaud and Massoulié \cite{BM} and Massoulié \cite{Mas}.
In classical Hawkes models, the intensity depends on the full past through a convolution kernel, leading to dynamics that can often be embedded into a finite-dimensional Markovian representation in specific cases, for instance with exponential kernels. However, in many biologically motivated models, a key mechanism is the reset of the membrane potential after a spike. This implies that only the activity accumulated since the last spike contributes to the current state.

This reset mechanism induces a variable-length memory structure: the effective past depends on the last jump time of each component. As a consequence, the system is intrinsically non-Markovian, since the state cannot be described by a finite-dimensional sufficient statistic independent of the past. This feature motivates the use of graphical constructions and backward exploration techniques.

\subsection{From Markovian to non-Markovian perfect simulation}

The present work builds on \cite{Gon}, where the system admits a Markovian description: the full dynamics can be encoded in the current state, allowing the use of Lyapunov and Doeblin techniques. In contrast, the Hawkes model considered here is intrinsically non-Markovian. Due to the reset mechanism, the intensity depends on the activity accumulated since the last spike, leading to a random and unbounded memory length.

As a consequence, the state of the system cannot be summarized by a finite-dimensional variable, and classical Markovian tools no longer apply. In particular, neither Lyapunov arguments nor embedded Markov chain techniques can be used directly. This requires a different approach based on graphical constructions and backward exploration of dependencies.

This transition from a Markovian to a non-Markovian framework is the main source of difficulty in the present work. In particular, the backward exploration of dependencies must account for both spatial interactions and stochastic memory lengths induced by reset times.

\subsection{Graphical construction and clan of ancestors}

We develop a graphical construction based on a dominating Poisson environment, following ideas that originate in perfect simulation and interacting systems with memory, see for instance \cite{PW,CFF,GLO,GL,FGGL}. The key object is the clan of ancestors of a target space-time point, defined as the set of past events whose acceptance decisions may influence the target value.

The backward exploration proceeds by revealing events in decreasing time order and tracking unresolved dependencies. Sure events, occurring at a minimal rate, resolve dependencies by resetting sites, while candidate events may propagate dependencies to neighbouring sites. This exploration defines a random genealogical structure. The central probabilistic estimate of the paper is a domination of this genealogy by a subcritical Galton-Watson process.

\subsection{Main contributions}

The contributions of the paper are as follows.
\begin{enumerate}[label=\rm(\roman*)]
\item We introduce a graphical representation of interacting Hawkes processes with reset and variable-length memory.
\item We define the clan of ancestors and prove its almost sure finiteness under an explicit subcriticality condition.
\item The extinction proof relies on a domination by a subcritical Galton-Watson process; see Athreya and Ney \cite{AN} for background on branching processes.
\item We construct a stationary solution as a measurable function of the underlying Poisson environment and prove uniqueness by a coupling-from-the-past argument.
\item We derive a backward-forward perfect simulation algorithm that produces exact samples from the stationary distribution.
\item We include a reproducible numerical illustration with fixed simulation parameters, seed, number of replications, and R code.
\end{enumerate}

\subsection{Organisation}

Section \ref{sec:model} introduces the model and the graphical construction. Section \ref{sec:exploration} defines the backward exploration. Section \ref{sec:branching} proves extinction of the clan via a branching-process domination. Section \ref{sec:stationarity} establishes existence and uniqueness of the stationary law. Section \ref{sec:perfect} presents the perfect simulation algorithm. Section \ref{sec:properties} provides qualitative properties, and Section \ref{sec:numerics} illustrates the results numerically. Appendix \ref{app:simulation_code} reports the core R code used in the numerical section.

\section{Model and graphical environment}
\label{sec:model}

\subsection{Reset Hawkes dynamics}

Let $I=\Z$. For each $i\in\Z$, let $N^i$ be a simple point process on $\R$. We write
\[
\Delta N_t^i := N^i(\{t\})
\]
and define the last jump time before $t$ by
\[
L_t^i := \sup\{s<t:\Delta N_s^i=1\},
\]
with the convention $L_t^i=-\infty$ if the set is empty.

For a nonnegative measurable kernel $h:\R_+\to\R_+$, define the membrane potential of site $i$ at time $t$ by
\begin{equation}\label{eq:X}
X_t^i := \sum_{j:|j-i|=1}\int_{(L_t^i,t)} h(t-s)\,N^j(\dd s).
\end{equation}
The intensity of $N^i$ is assumed to be
\begin{equation}\label{eq:intensity}
\lambda_t^i = \beta(X_{t-}^i),
\end{equation}
where $\beta:\R_+\to\R_+$ is a firing rate function.

\begin{ass}\label{ass:basic}
The following assumptions hold.
\begin{enumerate}[label=\rm(\alph*)]
\item The kernel $h$ is measurable, nonnegative and integrable.
\item The function $\beta$ is measurable and non-increasing.
\item There exist constants $0<\beta_*\leq \beta^*<\infty$ such that
\[
\beta_*\leq \beta(x)\leq \beta^*,\qquad x\geq 0.
\]
\end{enumerate}
\end{ass}

The monotonicity of $\beta$ is natural for inhibitory interactions: a larger membrane potential reduces the firing intensity. The construction below mainly uses boundedness and the reset structure. Monotonicity is useful to interpret the model and to align with inhibitory neuronal systems.

We define
\begin{equation}\label{eq:delta}
\delta:=\frac{\beta_*}{\beta^*-\beta_*}.
\end{equation}

\subsection{Marked Poisson construction}

Let
\[
\Pi = \{\Pi^i:i\in\Z\}
\]
be a family of independent Poisson point processes on $\R\times\{s,c\}\times[0,1]$. A point $(t,s,u)$ of $\Pi^i$ is interpreted as a sure event at site $i$ and time $t$, while a point $(t,c,u)$ is a candidate event. The intensity measure of $\Pi^i$ is
\[
\beta_*\,\dd t\,\delta_s(\dd \tau)\,\dd u
+ (\beta^*-\beta_*)\,\dd t\,\delta_c(\dd \tau)\,\dd u.
\]
Thus sure events at each site have rate $\beta_*$ and candidate events have rate $\beta^*-\beta_*$.

Given a càdlàg adapted family of membrane potentials, the thinning rule is:
\begin{enumerate}[label=\rm(\roman*)]
\item a sure event is always accepted;
\item a candidate event at site $i$ and time $t$ with mark $u$ is accepted if
\begin{equation}\label{eq:candidate_accept}
 u\leq p(X_{t-}^i),
 \qquad
 p(x):=\frac{\beta(x)-\beta_*}{\beta^*-\beta_*}.
\end{equation}
\end{enumerate}
Since $p(x)\in[0,1]$, this rule yields the desired intensity $\beta(X_{t-}^i)$.

\begin{rem}
This marked construction is equivalent to the usual thinning from a Poisson random measure of intensity $\dd t\,\dd z$ on $\R\times\R_+$. The present formulation separates sure and candidate events explicitly, which is convenient for the backward construction.
\end{rem}

\subsection{Local finiteness}

\begin{lem}\label{lem:local_finite_environment}
For every finite set $A\subset\Z$ and every bounded interval $J\subset\R$, the number of points of $\Pi$ in $A\times J\times\{s,c\}\times[0,1]$ is almost surely finite.
\end{lem}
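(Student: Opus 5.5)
The plan is to compute the expected number of points directly and conclude by Markov's inequality, after reducing to finitely many independent Poisson processes. Fix a finite set $A\subset\Z$ and a bounded interval $J\subset\R$, and write $|J|$ for its Lebesgue length. Let $M:=\sum_{i\in A}\Pi^i\big(J\times\{s,c\}\times[0,1]\big)$ be the number of points of $\Pi$ in $A\times J\times\{s,c\}\times[0,1]$. Since $A$ is finite, $M$ is a finite sum of the counts $\Pi^i(J\times\{s,c\}\times[0,1])$, so it suffices to show that each count is almost surely finite. A finite union of null events is null, and independence across sites is not even needed here.

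For a single site $i$, the count $\Pi^i(J\times\{s,c\}\times[0,1])$ is Poisson distributed, with mean equal to the intensity measure of the set $J\times\{s,c\}\times[0,1]$. Integrating the intensity given in the marked Poisson construction, this mean is
\[
\beta_*|J|\cdot 1+(\beta^*-\beta_*)|J|\cdot 1=\beta^*|J|.
\]
This is finite because $\beta^*<\infty$ by Assumption~\ref{ass:basic}(c) and $J$ is bounded. A Poisson variable with finite mean is almost surely finite; equivalently, $\E[M]=|A|\,\beta^*|J|<\infty$, so $M<\infty$ almost surely.

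There is no real obstacle in this argument. The only point needing care is that the intensity measure is $\sigma$-finite and assigns finite mass to bounded sets. This holds because the mark space $\{s,c\}\times[0,1]$ has finite total mass $\beta^*$ per unit time. One may also note, for later use, that the statement extends to all finite $A$ and bounded $J$ simultaneously on a single almost sure event. To see this, restrict to $A=\{-n,\dots,n\}$ and $J=[-n,n]$ with $n\in\N$; this is a countable family of events, and every finite $A$ and bounded $J$ is contained in one of them.
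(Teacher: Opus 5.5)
Your proof is correct and follows the same route as the paper: the intensity measure assigns the set the finite mass $|A|\,|J|\,\beta^*$, so the count is a Poisson variable with finite mean and is therefore almost surely finite. Your extra remark, using the countable family $A=\{-n,\dots,n\}$, $J=[-n,n]$ to obtain a single almost sure event valid for all finite $A$ and bounded $J$, is a useful point that the paper leaves implicit.
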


\begin{proof}
The total intensity of this set is $|A|\,|J|\,\beta^*<\infty$. The claim follows from the definition of a Poisson point process.
\end{proof}

\begin{lem}\label{lem:no_explosion_site}
Any process obtained by accepting a subset of the points of the environment is locally finite at each site.
\end{lem}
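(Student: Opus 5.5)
The statement is a direct consequence of Lemma~\ref{lem:local_finite_environment}, so the plan is a reduction to it. Fix a site $i\in\Z$ and let $N$ be any point process obtained by accepting some subset of the points of $\Pi$, whatever the acceptance rule. Then, pathwise, the restriction of $N^i$ to any set is contained in the projection onto the time axis of the points of $\Pi^i$ in that set. Local finiteness at site $i$ means that $N^i(J)<\infty$ for every bounded interval $J\subset\R$.

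\emph{Step 1: monotonicity.} For every bounded $J$, and on every realisation,
\[
N^i(J)\le \Pi^i\bigl(J\times\{s,c\}\times[0,1]\bigr).
\]
\emph{Step 2: exceptional set.} Apply Lemma~\ref{lem:local_finite_environment} with $A=\{i\}$ and $J=[-n,n]$ for each $n\in\N$. This gives a null set $\Omega_n$ outside of which the right-hand side is finite. Set $\Omega_0=\bigcup_{n}\Omega_n$, which is still a null set.

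\emph{Step 3: conclusion.} Every bounded interval lies in some $[-n,n]$. Hence, off $\Omega_0$, $N^i(J)<\infty$ for all bounded $J$ simultaneously. This holds for every accepted subset at once, since the bound in Step~1 does not depend on which points are accepted. Taking a further countable union over $i\in\Z$ gives a single null set that works for all sites and all acceptance rules.

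One point needs care: the statement quantifies over arbitrary acceptance rules, including rules that depend on the whole configuration. The exceptional set must therefore be chosen before the subset is fixed, and Steps~2 and~3 do exactly this. The same domination also shows that $N^i$ is simple and has no accumulation points. This follows because the Poisson process $\Pi^i$ almost surely has distinct times and finitely many points per bounded window, so $\Delta N^i_t\in\{0,1\}$ and $L^i_t$ is well defined.
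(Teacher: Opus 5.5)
Your proposal is correct and follows the paper's argument: at site $i$ the accepted points are pathwise dominated by the points of $\Pi^i$ in $J\times\{s,c\}\times[0,1]$, and these are almost surely finite by Lemma~\ref{lem:local_finite_environment}. Your countable union over the windows $[-n,n]$ and over sites gives one null set valid for all bounded intervals and all acceptance rules at once, a point the paper's one-line proof leaves implicit.
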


\begin{proof}
For every site $i$ and bounded interval $J$, the number of accepted events is bounded by the total number of points of $\Pi^i$ in $J\times\{s,c\}\times[0,1]$, which is finite almost surely by Lemma \ref{lem:local_finite_environment}.
\end{proof}

\section{Backward exploration and clan of ancestors}
\label{sec:exploration}

\subsection{Neighbourhood notation}

For a finite set $A\subset\Z$, define its nearest-neighbour boundary by
\[
\partial A := \{j\in\Z\setminus A:\exists i\in A, |j-i|=1\},
\]
and define the exploration set
\[
S(A):=A\cup\partial A.
\]
We use $A$ for currently unresolved active sites and $S(A)$ for the sites at which the next revealed event may matter.

\subsection{Definition of the exploration}

Fix a target point $(i,t)$; by translation invariance it is enough to take $(0,0)$, but the notation below is written for a general target.

\begin{defin}[Backward exploration]\label{def:backward}
Set
\[
C_0=\{i\},\qquad T_0=t,
\]
and let $\mathscr J_0=\varnothing$. Suppose $(C_n,T_n,\mathscr J_n)$ has been constructed and $C_n\neq\varnothing$. Define
\[
S_n:=S(C_n).
\]
Let $T_{n+1}$ be the largest time $r<T_n$ at which at least one point of the environment occurs at a site in $S_n$. Almost surely this point is unique; write it as
\[
(J_{n+1},T_{n+1},\tau_{n+1},U_{n+1}),
\]
where $J_{n+1}\in S_n$, $\tau_{n+1}\in\{s,c\}$ and $U_{n+1}\in[0,1]$.

Set
\[
\mathscr J_{n+1}:=\mathscr J_n\cup\{(J_{n+1},T_{n+1},\tau_{n+1},U_{n+1})\}.
\]
The active set is updated by
\begin{equation}\label{eq:update}
C_{n+1}:=
\begin{cases}
C_n\setminus\{J_{n+1}\}, & \tau_{n+1}=s,\ J_{n+1}\in C_n,\\
C_n\cup\{J_{n+1}\}, & \tau_{n+1}=c,\ J_{n+1}\in \partial C_n,\\
C_n, & \text{otherwise.}
\end{cases}
\end{equation}
The exploration stops at
\[
N_{\rm ext}:=\inf\{n\geq 0:C_n=\varnothing\}.
\]
The clan of ancestors of $(i,t)$ is the stopped random object
\[
\mathcal A(i,t):=\mathscr J_{N_{\rm ext}}
\]
when $N_{\rm ext}<\infty$.
\end{defin}

We denote by $(\mathcal G_n)_{n\geq0}$ the natural filtration of the backward exploration, namely
\[
\mathcal G_n
:=
\sigma\bigl(C_k,T_k,\mathscr J_k:\,0\leq k\leq n\bigr).
\]
Equivalently, $\mathcal G_n$ contains the active sets, the explored times and all revealed marks up to step $n$.

\begin{lem}\label{lem:well_defined_step}
For every $n\geq 0$, on $\{C_n\neq\varnothing\}$, $T_{n+1}$ is almost surely well defined and satisfies $T_{n+1}>-\infty$.
\end{lem}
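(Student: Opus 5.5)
We argue by induction on $n$, using the fact that on $\{C_n\neq\varnothing\}$ the set $S_n=S(C_n)$ is a finite nonempty set of sites. Finiteness holds because each $C_k$ is finite: $C_0=\{i\}$, and by \eqref{eq:update} each step adds at most one site. Hence $|C_n|\le n+1$ and $|S_n|\le 3|C_n|<\infty$. We also need $T_n>-\infty$ on $\{C_n\neq\varnothing\}$. This holds for $n=0$ since $T_0=t$, and for $n\ge1$ it follows from the induction hypothesis applied at step $n-1$, since $C_n\neq\varnothing$ forces $C_{n-1}\neq\varnothing$.

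Given this, we show that the set of environment points at sites in $S_n$ with times $r<T_n$ is almost surely nonempty, locally finite, and has no ties. The difficulty is that $S_n$ and $T_n$ are random and depend on $\Pi$, so we cannot apply Lemma \ref{lem:local_finite_environment} directly to a fixed set. We handle this by working on a full-probability event chosen in advance, uniformly over all deterministic choices. Let $\Omega_0$ be the event on which the following three properties hold.
\begin{itemize}
\item[(a)] For every finite $A\subset\Z$ and every bounded interval $J$ with rational endpoints, $\Pi$ has finitely many points in $A\times J$. This is a countable intersection of the almost sure events of Lemma \ref{lem:local_finite_environment}.
\item[(b)] For every site $j$ and every rational $q$, $\Pi^j$ has infinitely many points in $(-\infty,q)$. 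This holds because $\Pi^j((-\infty,q)\times\{s,c\}\times[0,1])$ is Poisson with infinite mean $\beta^*\cdot\infty$, so it is almost surely infinite. Alternatively, the counts on $[q-k-1,q-k)$, $k\ge0$, are i.i.d.\ Poisson$(\beta^*)$, and by Borel--Cantelli infinitely many of them are positive.
\item[(c)] No two points of $\bigcup_j\Pi^j$ share a time coordinate. This holds for each pair of sites and within each site, since independent Poisson processes with diffuse intensity almost surely have no common atoms.
\end{itemize}
Then $\PP(\Omega_0)=1$.

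On $\Omega_0\cap\{C_n\neq\varnothing\}$, fix the realised finite set $A=S_n$ and the time $T_n\in\R$. Choose a rational $q\ge T_n$. By (b), some $j\in A$ has infinitely many points below $q$. Since by (a) only finitely many points lie in $A\times[T_n-1,q]$, there are points with time $<T_n$, indeed infinitely many below $T_n-1$. The set $\{r<T_n:\text{some point of }\Pi\text{ at a site of }A\text{ has time }r\}$ is therefore nonempty, and its intersection with $[m,T_n)$ is finite for a rational $m<T_n$ chosen below one such time. It thus has a maximum $T_{n+1}\in(-\infty,T_n)$, which is finite. By (c), the point attaining it is unique.

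The only delicate step is the one just described: the random, $\Pi$-dependent choice of $S_n$ and $T_n$. We avoid any strong Markov or stopping-time argument by quantifying over countably many deterministic finite sets and rational intervals inside $\Omega_0$, so that the deterministic argument applies pathwise.
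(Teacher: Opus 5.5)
Your proof is correct, but it takes a different route from the paper. The paper argues probabilistically. It notes that the superposition of the event processes over the finite set $S_n$ is a homogeneous Poisson process of rate $|S_n|\beta^*$, which has a last point before any deterministic time. It then passes to the random time $T_n$ and random set $S_n$ by invoking the strong Markov property of the Poisson environment, and gets uniqueness from simplicity.

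You instead fix in advance a single full-measure event $\Omega_0$, built from countably many deterministic statements: local finiteness on rational windows, infinitely many points below each rational time at each site, and no shared time coordinates. On that event, existence, finiteness and uniqueness of the maximiser follow deterministically for whatever finite nonempty $S_n$ and real $T_n$ the exploration has produced.

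Your approach has three advantages.
\begin{itemize}
\item It never needs to make precise a strong Markov property for a backward exploration whose observed region is itself random; the paper invokes this without detail.
\item It yields one null set that works simultaneously for all $n$, and, since $\Omega_0$ does not depend on the target, for all targets $(i,t)$.
\item You actually justify the finiteness of $S_n$ via $|C_n|\le n+1$, which the paper only asserts.
\end{itemize}

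What the paper's viewpoint buys is distributional information. Given $\mathcal G_n$, the gap $T_n-T_{n+1}$ is exponential with rate $|S_n|\beta^*$ and the revealed mark is fresh, which is exactly what Lemma \ref{lem:mark_distribution} requires. Your pathwise argument does not replace that machinery; it only secures well-definedness.

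One cosmetic point: $[T_n-1,q]$ and $[m,T_n)$ need not have rational endpoints. When you apply (a), enclose them in a rational interval $[m',q]$ with rational $m'\le\min(m,T_n-1)$. This changes nothing in the argument.
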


\begin{proof}
The set $S_n$ is finite and non-empty. The superposition of all event processes at sites in $S_n$ is a homogeneous Poisson process on $\R$ with rate $|S_n|\beta^*$. Such a process has a last point before every deterministic time $T_n$. Since $T_n$ is measurable with respect to the previously explored environment, the same statement follows by the strong Markov property of Poisson point processes. The point is almost surely unique because the process is simple.
\end{proof}

\begin{lem}\label{lem:mark_distribution}
Conditionally on the explored past up to step $n$ and on $C_n$, the mark type of the next event satisfies
\[
\PP(\tau_{n+1}=s\mid\mathcal G_n)=p_0,
\qquad
\PP(\tau_{n+1}=c\mid\mathcal G_n)=1-p_0,
\]
where
\[
p_0:=\frac{\beta_*}{\beta^*}.
\]
Here $\mathcal G_n$ denotes the sigma-field generated by the exploration up to step $n$.
\end{lem}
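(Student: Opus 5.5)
The plan is to view the backward exploration as a sequential revelation of the restriction of $\Pi$ to the half-plane below $T_n$, and to use two standard facts about Poisson point processes: the strong Markov property at stopping times of the backward filtration, and the marking/colouring theorem. First, reverse time and regard the superposition $\Pi^{S}$ of all $\Pi^j$, $j\in\Z$, as a family of independent marked Poisson processes indexed by sites. The key observation is that $T_n$ is a stopping time for the backward filtration $\mathcal F_r:=\sigma(\Pi^j\cap((r,t]\times\{s,c\}\times[0,1]):j\in\Z)$, $r\le t$, and that $C_n$ and $S_n$ are $\mathcal F_{T_n}$-measurable. Moreover, on the event defining the step, the exploration has only inspected points at sites in $S_0\cup\dots\cup S_{n-1}$ in the windows $(T_{k+1},T_k)$ together with the points at $T_k$ themselves, so $\mathcal G_n\subset\mathcal F_{T_n}$, where $\mathcal F_{T_n}$ includes the point at $T_n$.

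Second, by the strong Markov property of Poisson processes in the backward direction (as already used in Lemma~\ref{lem:well_defined_step}), conditionally on $\mathcal F_{T_n}$ the restriction of $\Pi$ to $\{r<T_n\}$ is again a family of independent Poisson processes with the original intensity measures $\beta_*\dd r\,\delta_s\,\dd u+(\beta^*-\beta_*)\dd r\,\delta_c\,\dd u$ at each site. In particular, since $S_n$ is $\mathcal F_{T_n}$-measurable and finite (Lemma~\ref{lem:well_defined_step}), conditionally the superposition over $j\in S_n$ is a marked Poisson process whose ground process has rate $|S_n|\beta^*$. Its marks $(j,\tau,u)$ are i.i.d., independent of the ground process, with law $|S_n|^{-1}\ind_{S_n}(j)\otimes\bigl(p_0\delta_s+(1-p_0)\delta_c\bigr)\otimes\dd u$, by the marking theorem (a superposition of independent Poisson processes is a Poisson process whose intensity measure splits as a product).

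Third, the first point below $T_n$ of such a marked process has mark distributed as the mark law, independently of its time. Hence $\PP(\tau_{n+1}=s\mid\mathcal F_{T_n})=\beta_*|S_n|/(\beta^*|S_n|)=p_0$. Since this is a constant, conditioning down to $\mathcal G_n\subset\mathcal F_{T_n}$ via the tower property gives the claim, and the statement for $c$ follows by complement. As a by-product, the same argument shows that $\tau_{n+1}$ is independent of $(J_{n+1},T_{n+1},U_{n+1})$ given $\mathcal G_n$, with $J_{n+1}$ uniform on $S_n$.

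The main obstacle is justifying the strong Markov step rigorously: $T_n$ is a random time determined by a random, data-dependent set of sites. I would handle it by approximating $T_n$ from above by dyadic stopping times, or more simply by induction on $n$. For the induction, condition on $\mathcal F_{T_n}$ and note that, given it, $T_{n+1}$ is the first point after time $T_n$ (in reversed time) of a Poisson process on a fixed finite site set. It is therefore a stopping time, and the independent-increments property over the disjoint region $\{r<T_{n+1}\}$ propagates the statement to step $n+1$.
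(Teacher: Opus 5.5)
Your proposal is correct and follows the same route as the paper. The next event in $S_n$ is sure with probability $|S_n|\beta_*/(|S_n|\beta^*)=p_0$, and independence from the explored past comes from the strong Markov property of the Poisson environment below $T_n$. Your version makes rigorous what the paper only asserts: you work with the full backward filtration $\mathcal F_{T_n}\supset\mathcal G_n$, check inductively that $T_n$ is a stopping time, apply the marking theorem to the superposition over $S_n$, and then condition down to $\mathcal G_n$ by the tower property.
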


\begin{proof}
At every site, sure and candidate events are independent Poisson processes with rates $\beta_*$ and $\beta^*-\beta_*$ respectively. Over the finite set $S_n$, the total sure-event rate is $|S_n|\beta_*$ and the total candidate-event rate is $|S_n|(\beta^*-\beta_*)$. Therefore the next event in $S_n$ is sure with conditional probability
\[
\frac{|S_n|\beta_*}{|S_n|\beta^*}=\frac{\beta_*}{\beta^*}=p_0.
\]
The independence from the previously explored part follows from the strong Markov property and independent increments of the Poisson environment on the unexplored intervals.
\end{proof}

\subsection{A closure property}

The next statement is deterministic once the environment and the exploration are fixed. It is the essential reason why the backward exploration is useful.

\begin{lem}[Ancestral closure]\label{lem:ancestral_closure}
Assume $N_{\rm ext}<\infty$. Let $\mathscr J:=\mathscr J_{N_{\rm ext}}$ be the revealed list. Any event of the environment that can affect the acceptance status of an event in $\mathscr J$, or the value of the target membrane potential, is itself contained in $\mathscr J$.
\end{lem}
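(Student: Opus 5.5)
Since the statement is deterministic given the environment, I would prove it by a backward induction along the exploration, organised around a "dependency region". For each $n\le N_{\rm ext}$ consider the space-time region
\[
R_n:=\bigcup_{j\in C_n}\{j\}\times[\,\text{last sure event at } j \text{ before } T_n,\ T_n)\ \cup\ \bigcup_{j\in\partial C_n}\{j\}\times(\dots,T_n),
\]
or more precisely the set of space-time locations whose events can influence quantities that are still unresolved at step $n$.

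The basic observation comes from \eqref{eq:X} and \eqref{eq:candidate_accept}. The acceptance status of a candidate at $(j,r)$, as well as the value $X^j_r$, depends only on two things: the last accepted jump time $L^j_r$ of site $j$, and the accepted events at the neighbours $j\pm1$ during $(L^j_r,r)$. A sure event at $j$ is accepted no matter what, so any sure event at $j$ before $r$ bounds $L^j_r$ from below. Hence once the most recent sure event at $j$ before $r$ is revealed, everything strictly earlier at $j$ and at $j\pm1$ stops mattering for $X^j_r$.

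I would state the invariant as follows. At each step $n$, every event that can affect the target or the acceptance of an event in $\mathscr J_n$ is either already in $\mathscr J_n$, or lies at a site of $S_n$ at a time $<T_n$ and is relevant only through sites in $C_n$. By "relevant through $j\in C_n$" I mean that it can affect $X^j_r$ for some $r\ge T_n$ and lies in $(\text{last sure event at }j,T_n)$ or in the corresponding window at $j\pm1$.

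The invariant is checked step by step.

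\textbf{Initialisation.} The target $X^i_t$ depends only on events at $i$ and $i\pm1$ before $t$, that is, on $S(\{i\})$, and at $i$ only through $L^i_t$.

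\textbf{Induction step.} Take the revealed event $(J_{n+1},T_{n+1},\tau_{n+1})$, which is the latest event in $S_n$ before $T_n$. There are three cases.

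\begin{itemize}
\item \emph{Sure event with $J_{n+1}\in C_n$.} It fixes $L^{J}_r$ for all later $r$ that are still relevant. So site $J$ no longer needs earlier events, and removing $J$ from $C_n$ keeps the invariant: earlier events at $J$'s neighbours matter only if those neighbours remain active or adjacent to an active site.
\item \emph{Candidate event at $J_{n+1}\in\partial C_n$.} Its acceptance depends on $X^{J}_{T_{n+1}-}$. Adding $J$ to $C$ puts $J$ and its neighbours into $S_{n+1}$, which covers every new dependency.
\item \emph{All other cases}, that is, a candidate in $C_n$ or a sure event in $\partial C_n$. A candidate at a site of $C_n$ is already covered, because its own potential involves the same sites, so $S$ is unchanged. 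A sure event at a boundary site is accepted unconditionally, so it creates no new dependency.
\end{itemize}

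Because exploration proceeds in decreasing time and $T_{n+1}$ is the latest point in $S_n$, no relevant event in $(T_{n+1},T_n)$ is skipped.

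The main obstacle is the case of a sure event at $J\in C_n$. There one must argue carefully that $J$'s neighbours, which may themselves be unresolved, do not need events at $J$ earlier than $T_{n+1}$. The reason is that a neighbour $k$ of $J$ depends on site $J$ only through the $N^J$-integral over $(L^k_r,r)$. If $k$ is still active, or adjacent to an active site, then $J$ stays in $S_{n+1}$ when needed, since $J\in\partial C_{n+1}$ whenever $k\in C_{n+1}$. So earlier events at $J$ will still be revealed if they matter.

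At $N_{\rm ext}$ we have $C=\varnothing$, so the residual set in the invariant is empty. Every influencing event therefore lies in $\mathscr J$, which proves Lemma \ref{lem:ancestral_closure}. Almost sure uniqueness of event times, from Lemma \ref{lem:well_defined_step}, guarantees that there are no ties.
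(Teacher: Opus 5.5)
Your proposal is correct and takes the same route as the paper: an induction along the backward exploration. In it, a sure event at an active site screens the earlier past by reset, a boundary candidate forces its site into the active set, nearest-neighbour locality rules out events outside $S_n$, and $C_{N_{\rm ext}}=\varnothing$ leaves no unresolved dependency. Your explicit invariant, and your treatment of the two ``otherwise'' cases and of the neighbours of a freshly resolved site, are more careful than the paper's sketch. One caveat: read the invariant in terms of direct dependence (events in the post-reset window of an active site or of its neighbours). Sites outside $S_n$ can still influence the target transitively through not-yet-revealed boundary candidates, and the full statement then follows by chaining direct dependencies back through $\mathscr J$.
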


\begin{proof}
We prove the statement by induction along the backward construction. At step $n$, the active set $C_n$ represents the set of sites whose unresolved post-reset histories may still be needed to determine the target. The enlarged set $S_n=C_n\cup\partial C_n$ contains precisely those sites at which an event can either resolve an active history or contribute presynaptically to one.

If the next event in $S_n$ is a sure event at an active site, then it is necessarily accepted and resets that site. Consequently, all earlier events affecting that site before this sure event are screened by the reset and cannot influence any later membrane potential of that site. Removing the site from $C_n$ is therefore valid.

If the next event is a candidate event at a boundary site, then its possible acceptance may contribute to the potential of a neighbouring active site. In order to decide this candidate event, one must know the post-reset history of the boundary site. The update adds this site to the active set.

If an event occurs outside $S_n$, it is neither at an active site nor at a nearest neighbour of an active site. Since interactions are nearest-neighbour, such an event cannot influence the membrane potential of any active site before the active set changes. It therefore cannot belong to any ancestral chain leading to the target unless it becomes relevant later. If it becomes relevant later, it will then be in the corresponding enlarged set and will be revealed by the exploration.

When the exploration stops, $C_{N_{\rm ext}}=\varnothing$. Thus every unresolved dependency has been resolved by a sure reset. Events before the stopping time are screened by these resets. Hence no event outside the revealed list can affect the target value or the acceptance decisions of the revealed events.
\end{proof}

\section{Subcritical extinction of the clan}
\label{sec:branching}

This section gives a domination argument. The issue is that births in the true exploration are not naturally attached to a unique parent in a Markovian way. We therefore construct an auxiliary exploration that dominates the true active set and has a branching representation.

\subsection{Dominating exploration}

For a finite active set $C$, define its boundary size by $|\partial C|$. In the nearest-neighbour one-dimensional lattice, each active site has at most two boundary neighbours, and hence
\begin{equation}\label{eq:boundary_bound}
|\partial C|\leq 2|C|.
\end{equation}
The true exploration removes an active site only when a sure event occurs at that site; it adds a new active site only when a candidate event occurs on the boundary.

We introduce a continuous-time birth-death process $Y=(Y_t)_{t\geq 0}$ on $\N$ with the following transition rates:
\[
y\to y-1 \quad \text{at rate } \beta_* y,
\]
and
\[
y\to y+1 \quad \text{at rate } 2(\beta^*-\beta_*)y.
\]
This process is a linear birth-death process. It dominates the cardinality of the clan active set in continuous backward time, because each active site can be removed at rate $\beta_*$ and can generate boundary candidate additions at total rate at most $2(\beta^*-\beta_*)$.

The previous rough domination gives the sufficient condition
\[
2(\beta^*-\beta_*)<\beta_*.
\]
However, the exploration rule used in this paper samples from $S(C)$ and counts only first entrances into the active set. A sharper discrete domination gives the condition $\beta^*-\beta_*<\beta_*$. We now state the precise version used in the sequel.

\begin{lem}[Discrete offspring domination]\label{lem:discrete_offspring_domination}
There exists a Galton-Watson process $(Z_m)_{m\geq 0}$ with $Z_0=1$ and offspring distribution
\[
\PP(\xi=k)=p_0(1-p_0)^k,
\qquad k\in\N,
\qquad p_0=\frac{\beta_*}{\beta^*},
\]
such that the total number of active-site resolutions in the clan exploration is stochastically dominated by the total progeny of $(Z_m)$.
\end{lem}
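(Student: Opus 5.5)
The plan is to code the exploration of Definition \ref{def:backward} as a Łukasiewicz path. Then I compare that path with the random walk whose first hitting time of $0$ gives the total progeny of a Galton--Watson tree with offspring law $\PP(\xi=k)=p_0(1-p_0)^k$. This law is exactly the law of the number of candidate marks seen before the first sure mark in an i.i.d.\ sequence of marks that are sure with probability $p_0$. By Lemma \ref{lem:mark_distribution} the exploration steps carry marks $\tau_1,\tau_2,\dots$ that are i.i.d.\ with $\PP(\tau=s)=p_0$, whatever $C_n$ is. So the natural genealogy is the following: every active site, once created, is followed in a depth-first (stack) order, and its \emph{children} are the candidate-induced births charged to it before its own resolving sure event. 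A standard Galton--Watson coupling then applies. Take the i.i.d.\ mark sequence and read it one active site at a time. The site currently being processed receives geometric$(p_0)$ many candidate marks before its sure mark, and each candidate mark produces at most one new active site. The number of resolutions is then at most the number of sites ever processed, and that number is the total progeny of $(Z_m)$.

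The steps, in order, are as follows.

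\emph{Step 1.} Restrict attention to the \emph{effective} steps, meaning a sure event at a site of $C_n$ or a candidate event at a site of $\partial C_n$. All other steps leave $C_n$ unchanged by \eqref{eq:update} and can be discarded. Discarding them does not change the number of resolutions.

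\emph{Step 2.} Assign to each birth at $j\in\partial C_n$ a unique parent. I would take the active neighbour of $j$ with the smallest index, so that the births form a forest rooted at the target site $i$.

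\emph{Step 3.} Couple each active site with its own slice of the Poisson environment. In the basic version the slice is its own site clock. I would add an independent extra clock when the site does not own a boundary slot, so that, viewed from the parent, the numbers of candidate-births before the resolving sure event are dominated by independent geometric$(p_0)$ variables. The strong Markov property used in Lemma \ref{lem:mark_distribution} gives the required conditional independence.

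\emph{Step 4.} Conclude that the total number of resolutions is at most the total progeny of $(Z_m)$.

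The main obstacle is Step 3. A site $i\in C_n$ is killed only by sure marks at $i$, which occur at rate $\beta_*$. Births charged to $i$, however, come from candidate marks at up to two boundary sites, at total rate up to $2(\beta^*-\beta_*)$. A naive per-site count therefore yields only the rough birth--death bound displayed before the lemma, with offspring mean $2(\beta^*-\beta_*)/\beta_*$, and not $(1-p_0)/p_0$. To obtain the sharper geometric law, I would first try to exploit the one-dimensional geometry through the first-entrance rule. Each boundary site that is entered becomes active and stops contributing candidates from the boundary. An interval of active sites has only two boundary sites, so $|\partial C|\leq |C|+1$ for an interval and in general $|\partial C|\le 2(\#\text{components of }C)$. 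I would then try to charge each boundary slot to a distinct active site, leaving at most one uncharged slot, which is absorbed by the root. If this charging works, each active site carries a single "candidate channel" of rate $\beta^*-\beta_*$ competing with its own sure clock of rate $\beta_*$, and this gives precisely geometric$(p_0)$ offspring. If the charging fails for sets $C$ with many components, the fallback is to prove the lemma with the offspring law of the rough domination and adjust $p_0$ accordingly.
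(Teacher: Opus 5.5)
\paragraph{There is a genuine gap.} It is the one you flag yourself: Step~3 is never carried out, and it cannot be.

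\emph{Why your first-paragraph coupling fails.} The i.i.d.\ marks of Lemma~\ref{lem:mark_distribution} are not attached to the site being processed. A sure mark may fall at another site of $S_n$, or at a boundary site, where it does nothing. A candidate mark at an active site creates nothing. Equivalently, on effective steps $|C|$ increases with probability $|\partial C|(\beta^*-\beta_*)/(|\partial C|(\beta^*-\beta_*)+|C|\beta_*)$. This is at most $1-p_0$ only when $|\partial C|\le|C|$, which already fails for $C=\{i\}$. So the \L{}ukasiewicz comparison breaks at the very first step.

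\emph{Why your charging repair fails.} The root alone owns two boundary slots, so the spare slot cannot be ``absorbed by the root'' without changing the root's offspring law. Moreover, reachable sets such as $\{0,3\}$ (births at $1,2,3$, then resolutions at $1$ and $2$) have $|\partial C|=2|C|$, so slots cannot be charged injectively.

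\emph{The statement is false whenever $\beta_*<\beta^*$.} From $C_0=\{i\}$, the first effective step is a birth with probability $2(\beta^*-\beta_*)/(2\beta^*-\beta_*)$. After such a birth, a finite clan has at least two resolutions. Hence, whenever the clan is a.s.\ finite (e.g.\ when $2(\beta^*-\beta_*)\le\beta_*$, by the rough birth--death comparison),
\[
\PP(\text{resolutions}\geq 2)=\frac{2(\beta^*-\beta_*)}{2\beta^*-\beta_*}>\frac{\beta^*-\beta_*}{\beta^*}=\PP\bigl(\textstyle\sum_m Z_m\geq 2\bigr).
\]
For the paper's baseline $\beta^*=3$, $\beta_*=2$, this reads $1/2>1/3$.

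\paragraph{The paper's proof does not get around this.} In its Step~2 it asserts, citing Lemma~\ref{lem:mark_distribution}, that at each opportunity of a vertex $j$ the conditional probability that $j$ is resolved is at least $p_0$. That lemma only says that the next event \emph{somewhere} in $S_n$ is sure with probability $p_0$. Conditionally on an event relevant to $j$, resolution has probability $\beta_*/(\beta_*+k(\beta^*-\beta_*))$, where $k\in\{0,1,2\}$ is the number of boundary neighbours charged to $j$. This is strictly below $p_0$ when $k=2$, which is exactly your obstacle.

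\paragraph{What is actually provable.} Your fallback goes through: per-vertex geometric domination with success parameter $\beta_*/(2\beta^*-\beta_*)$, i.e.\ mean offspring $2(\beta^*-\beta_*)/\beta_*$. The paper's Ulam--Harris embedding then applies verbatim. This yields extinction only under $2(\beta^*-\beta_*)<\beta_*$, not under $\delta>1$. Reaching the sharper threshold would require a different dominating law and a genuinely non-per-site argument.
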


\begin{proof}
We construct a Galton-Watson tree which dominates the genealogical structure of the clan exploration.

\medskip

\noindent\textbf{Step 1: Genealogical tree.}
We first associate a rooted tree $\mathcal T$ with the clan exploration. The initial active site is declared to be the root and is assigned generation zero.

Whenever a site enters the active set for the first time, we assign it a unique parent. If the site is discovered through a boundary candidate event while a site $j$ of generation $m$ is active, then it is assigned generation $m+1$ and declared to be a child of $j$.

If several active sites could be regarded as responsible for the same discovery, we use a deterministic tie-breaking rule fixed once and for all as part of the definition of the exploration procedure, for instance the nearest site and then the smallest integer. Hence every newly discovered site has a unique parent, and $\mathcal T$ is a well-defined rooted genealogical tree.

The vertices of $\mathcal T$ are the sites that enter the active set during the exploration. The total number of active-site resolutions is bounded by the number of vertices of $\mathcal T$.

\medskip

\noindent\textbf{Step 2: Offspring control.}
Fix a vertex $j$ of $\mathcal T$, and let $\mathcal F_j$ denote the sigma-field generated by the exploration up to the time when $j$ becomes active.

From the moment $j$ becomes active until it is resolved, it remains continuously in the active set. In particular, it is not removed and reinserted later. During this time interval, consider the successive relevant opportunities associated with $j$, namely the decision times at which either:
\begin{itemize}
\item a sure event occurs at $j$, resolving its dependence and removing $j$ from the active set, or
\item the exploration continues through a boundary candidate event which may create a new active site assigned as a child of $j$.
\end{itemize}

These opportunities are defined adaptively: their timing and their nature depend on the evolution of the whole active set. However, Lemma \ref{lem:mark_distribution} applies precisely in this adaptive setting. At each such opportunity, conditionally on the full past and on the current active configuration, the probability that $j$ is resolved by a sure event is at least
\[
p_0=\frac{\beta_*}{\beta^*}.
\]
This lower bound is uniform over all histories and all active configurations.

Let $N_j$ be the number of children assigned to $j$. For $r\geq0$, the event $\{N_j\ge r+1\}$ implies that, before the resolving sure event of $j$, there have been at least $r+1$ relevant opportunities at which the resolving alternative failed. Since $j$ remains active continuously until its resolution, these failures occur at consecutive relevant opportunities preceding the resolving event.

At each such opportunity, the conditional probability of failure is at most $1-p_0$. Iterating conditional expectations gives
\[
\PP(N_j\ge r+1\mid \mathcal F_j)
\leq (1-p_0)^{r+1}.
\]
Thus, conditionally on $\mathcal F_j$, $N_j$ is stochastically dominated by a geometric random variable $\xi$ with law
\[
\PP(\xi=k)=p_0(1-p_0)^k,
\qquad k\in\N.
\]

\medskip

\noindent\textbf{Step 3: Ulam-Harris coupling.}
Let
\[
\mathcal U=\bigcup_{n\ge0}\N^n,
\]
where $\N^0=\{\varnothing\}$ consists of the empty sequence, which is the root of the Ulam-Harris tree.

On an auxiliary probability space, let $(\xi_v)_{v\in\mathcal U}$ be an independent family with common distribution
\[
\PP(\xi_v=k)=p_0(1-p_0)^k,
\qquad k\in\N.
\]

We recursively embed $\mathcal T$ into $\mathcal U$. The root of $\mathcal T$ is mapped to $\varnothing$. Suppose that a vertex of $\mathcal T$ has been mapped to $v\in\mathcal U$, and let $N_v$ denote its actual number of children.

By the conditional stochastic domination above and the standard quantile coupling, we may couple $N_v$ with $\xi_v$ so that
\[
N_v\leq \xi_v
\qquad \text{almost surely}.
\]
The $N_v$ actual children are then mapped to
\[
(v,1),\ldots,(v,N_v).
\]

This recursive construction gives an injective map from $\mathcal T$ into $\mathcal U$: children of a given vertex receive distinct indices, and vertices with different parents lie below different Ulam-Harris prefixes.

\medskip

\noindent\textbf{Step 4: Domination.}
Define
\[
\mathcal G
=
\left\{
v=(v_1,\ldots,v_n)\in\mathcal U:
v_i\leq \xi_{(v_1,\ldots,v_{i-1})}
\text{ for all }1\leq i\leq n
\right\}.
\]
Then $\mathcal G$ is the family tree of a Galton-Watson process $(Z_m)_{m\ge0}$ with $Z_0=1$ and offspring distribution
\[
\PP(\xi=k)=p_0(1-p_0)^k.
\]
The construction above ensures that the image of $\mathcal T$ is contained in $\mathcal G$. Therefore
\[
|\mathcal T|\leq |\mathcal G|
=
\sum_{m\ge0}Z_m.
\]
Since the number of active-site resolutions is bounded by $|\mathcal T|$, the desired stochastic domination follows.
\end{proof}

\begin{rem}\label{rem:condition_constant}
The domination in Lemma \ref{lem:discrete_offspring_domination} is the key place where constants enter. If one uses the simpler continuous-time cardinality domination, a stronger condition with a factor two is obtained. The discrete genealogical domination exploits the fact that only first entrances are counted as births and that every unresolved lineage is eventually killed by a sure reset. In applications where the geometry of the boundary is larger, the offspring distribution must be modified accordingly.
\end{rem}

\subsection{Extinction theorem}

\begin{theo}[Extinction of the clan]\label{theo:extinction}
Assume Assumption \ref{ass:basic}. If
\begin{equation}\label{eq:subcrit}
\delta=\frac{\beta_*}{\beta^*-\beta_*}>1,
\end{equation}
then for every target point $(i,t)$,
\[
N_{\rm ext}<\infty
\qquad\text{and}\qquad
T_{N_{\rm ext}}> -\infty
\]
almost surely.
\end{theo}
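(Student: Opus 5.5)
The plan is to use Lemma \ref{lem:discrete_offspring_domination} together with a subcriticality computation for the geometric offspring law, and then turn finiteness of the number of resolutions into finiteness of $N_{\rm ext}$ and of the explored time depth.

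\textbf{Step 1: subcriticality.} The offspring law $\PP(\xi=k)=p_0(1-p_0)^k$ has mean
\[
\E[\xi]=\frac{1-p_0}{p_0}=\frac{\beta^*-\beta_*}{\beta_*}=\frac1\delta .
\]
Under \eqref{eq:subcrit} this mean is strictly less than one. Hence the Galton--Watson process $(Z_m)$ dies out almost surely, and its total progeny $\sum_m Z_m$ is a.s. finite; in fact its expectation is $1/(1-\delta^{-1})$ (see \cite{AN}). By Lemma \ref{lem:discrete_offspring_domination}, the tree $\mathcal T$ of sites that ever enter the active set is a.s. finite. Since every vertex is activated once and stays active until it is resolved by a sure event, only finitely many active-site resolutions occur.

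\textbf{Step 2: from finitely many sites to $N_{\rm ext}<\infty$.} This is the step I expect to be the main obstacle, because the exploration also performs \emph{idle} steps (the ``otherwise'' case in \eqref{eq:update}) that neither add nor remove sites. I would argue as follows. On the event $\{|\mathcal T|<\infty\}$, consider each step $n$ with $C_n\neq\varnothing$. By Lemma \ref{lem:mark_distribution} and the uniform rate structure, conditionally on $\mathcal G_n$ the next event lies at a given site of $C_n$ and is sure with probability $\beta_*/(|S_n|\beta^*)$. While the active set stays inside the finite set $\mathcal T$, we have $|S_n|\le 3|\mathcal T|$, which bounds this probability below. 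A conditional Borel--Cantelli (Lévy) argument then shows that, unless $C_n$ becomes empty, resolutions keep occurring infinitely often. That contradicts finiteness of resolutions. To avoid conditioning on the random bound $|\mathcal T|$, I would intersect with the events $\{|\mathcal T|\le K\}$ and let $K\to\infty$. Hence $N_{\rm ext}<\infty$ a.s.

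\textbf{Step 3: finiteness of the time depth.} By Lemma \ref{lem:well_defined_step}, each $T_{n+1}$ is a.s. finite. Since $N_{\rm ext}$ is a.s. finite, $T_{N_{\rm ext}}$ is a finite sum of a.s. finite backward increments $T_n-T_{n+1}$. One can make this explicit: given $\mathcal G_n$, the increment is exponential with rate $|S_n|\beta^*\ge\beta^*$, by the strong Markov property of the Poisson environment. Therefore $T_{N_{\rm ext}}>-\infty$ a.s. The statement holds for every target $(i,t)$ by translation invariance of $\Pi$ in space and time.
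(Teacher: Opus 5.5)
Your proposal is correct and follows the paper's proof: subcriticality of the geometric Galton--Watson domination (mean $1/\delta<1$), then $N_{\rm ext}<\infty$, then $T_{N_{\rm ext}}>-\infty$ as the last of finitely many finite times; your Step 2 handles the idle steps more carefully than the paper's brief appeal to the rate-$\beta_*$ sure events at each activated site. That step can be simplified: since $|S_n|\le 3|C_n|$, one has $\PP(\tau_{n+1}=s,\ J_{n+1}\in C_n\mid\mathcal G_n)\ge p_0/3$ uniformly on $\{C_n\neq\varnothing\}$, so the truncation on $\{|\mathcal T|\le K\}$ is unnecessary, and you should take the a.s.\ finiteness of the number of resolutions directly from the statement of Lemma~\ref{lem:discrete_offspring_domination} rather than from ``every vertex is activated once'', which is false for Definition~\ref{def:backward} (a resolved site can re-enter through a boundary candidate event while a neighbour is still active).
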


\begin{proof}
Let $(Z_m)$ be the Galton-Watson process of Lemma \ref{lem:discrete_offspring_domination}. Its offspring mean is
\[
\E[\xi]=\frac{1-p_0}{p_0}
=\frac{\beta^*-\beta_*}{\beta_*}.
\]
The condition \eqref{eq:subcrit} is exactly $\E[\xi]<1$. Hence the Galton-Watson process is subcritical and its total progeny is almost surely finite. By Lemma \ref{lem:discrete_offspring_domination}, the total number of sites ever activated in the clan is almost surely finite.

Each activated site is resolved by a sure event after finitely many revealed steps. Indeed, sure events at a fixed site form a Poisson process of positive rate $\beta_*$. Since only finitely many sites are activated, the exploration performs only finitely many activations and resolutions. Therefore $N_{\rm ext}<\infty$ almost surely.

The corresponding times satisfy
\[
t=T_0>T_1>\cdots>T_{N_{\rm ext}}.
\]
Since this sequence is finite, its last element is finite. Hence $T_{N_{\rm ext}}> -\infty$ almost surely.
\end{proof}

\begin{cor}[Exponential tail of the genealogical size]\label{cor:exp_tail}
Under \eqref{eq:subcrit}, there exist constants $C<\infty$ and $\rho\in(0,1)$ such that
\[
\PP(N_{\rm ext}\geq n)\leq C\rho^n,
\qquad n\geq 1.
\]
\end{cor}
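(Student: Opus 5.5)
The bound on $\PP(N_{\rm ext}\geq n)$ will come from two exponential estimates: an exponential tail for the total progeny of the dominating Galton--Watson process of Lemma \ref{lem:discrete_offspring_domination}, and a control of the number of exploration steps spent per activated site. The obstacle is that $N_{\rm ext}$ counts \emph{all} revealed events, including irrelevant ones: candidate events at active sites, sure events at boundary sites, and candidate events at boundary sites that are already active. The progeny bound alone does not see these steps. I would therefore bound $N_{\rm ext}$ by a sum of geometric-type step counts indexed by the vertices of the genealogical tree $\mathcal T$.

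\textbf{Step 1 (progeny tail).} Write $\mathcal N:=\sum_{m\ge0}Z_m$. The offspring law is geometric with parameter $p_0$, so $\E[s^\xi]<\infty$ for some $s>1$. Under \eqref{eq:subcrit} we have $\E[\xi]<1$. The standard result for subcritical Galton--Watson processes with offspring exponential moments (via the hitting-time representation $\mathcal N=\inf\{k:1+\sum_{\ell\le k}(\xi_\ell-1)=0\}$ and a Cram\'er--Chernoff bound on the random walk with negative drift) gives $\PP(\mathcal N\ge k)\le C_1\rho_1^k$. By Lemma \ref{lem:discrete_offspring_domination}, $|\mathcal T|$ is stochastically dominated by $\mathcal N$, so $\PP(|\mathcal T|\ge k)\le C_1\rho_1^k$.

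\textbf{Step 2 (steps per site).} While $C_n\neq\varnothing$, $|S_n|\le 3|C_n|$, and the next event is a sure event at a given active site $j$ with conditional probability $\beta_*/(|S_n|\beta^*)$. Directly using this is awkward because $|C_n|$ varies. Instead I would work in backward continuous time. For each vertex $j$ of $\mathcal T$, the active period of $j$ lasts until the first sure event at $j$, which is an $\mathrm{Exp}(\beta_*)$ time $E_j$ by the strong Markov property of $\Pi^j$. The events revealed during this period lie in $\Pi$ restricted to $\{j-1,j,j+1\}$ over an interval of length $E_j$. Hence
\[
N_{\rm ext}\le \sum_{j\in\mathcal T} M_j,
\]
where $M_j$ is the number of points of $\Pi$ on the sites $j-1,j,j+1$ during the active window of $j$. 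Each $M_j$ is a Poisson$(3\beta^*E_j)$ mixture, which is geometric with a parameter depending only on $\beta_*,\beta^*$, so it has exponential moments uniformly, conditionally on the past at the activation time of $j$.

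\textbf{Step 3 (combine).} Write $\PP(N_{\rm ext}\ge n)\le \PP(|\mathcal T|\ge \epsilon n)+\PP(\sum_{k\le \epsilon n}M_{(k)}\ge n)$, enumerating vertices in order of activation. For the second term I would use a supermartingale bound $\E[e^{\theta\sum_{k\le K}M_{(k)}}]\le \phi(\theta)^K$, with $\phi(\theta)=\sup\E[e^{\theta M_j}\mid\mathcal F_j]<\infty$ for small $\theta$. Markov's inequality then gives $\phi(\theta)^{\epsilon n}e^{-\theta n}$, which is exponentially small once $\epsilon$ is small enough that $\epsilon\log\phi(\theta)<\theta$.

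\textbf{Main obstacle.} The delicate point is Step 2: the windows of different sites overlap, and a given point of $\Pi$ can be counted for several $j$. Overcounting only helps an upper bound, so this is harmless. The real difficulty is justifying the conditional exponential moment bound for $M_j$ given $\mathcal F_j$ when the activation times are stopping times of the backward filtration. I expect to handle this with the strong Markov property of the Poisson environment, as in Lemma \ref{lem:mark_distribution}, applied at the activation time of $j$.
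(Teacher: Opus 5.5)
You are right about what the paper's proof glosses over. It asserts that $N_{\rm ext}$ is dominated by a finite multiple of the total progeny, but the number of unproductive steps per resolution is random and unbounded. Your pathwise bound $N_{\rm ext}\le\sum_j M_j$ is also fine, provided vertices are activation episodes rather than sites, since a site can leave $C$ and be re-added.

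\textbf{Gap in Step 3.} The bound $\E[e^{\theta\sum_{k\le K}M_{(k)}}]\le\phi(\theta)^K$ does not follow from a bound on $\E[e^{\theta M_j}\mid\mathcal F_j]$. Peeling off factors by the tower property needs $M_{(1)},\dots,M_{(k-1)}$ to be $\mathcal F_{(k)}$-measurable. But the parent of the $k$-th vertex is, by construction, still active when that vertex is activated. Its window therefore reaches below $a_{(k)}$, into environment not yet revealed, and shares two sites and a time interval with the $k$-th window. Overcounting is harmless for the pathwise inequality but not for the exponential moment: shared points enter the exponent twice and create positive dependence. So the single conditional moment you flag as the main obstacle is the easy part; the joint bound is what is missing.

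\textbf{Repair of Step 3.} Decompose by steps instead of by vertices. Conditionally on $\mathcal G_n$, the revealed event is a sure event at \emph{some} active site with probability $\beta_*|C_n|/(\beta^*|S_n|)\ge\beta_*/(3\beta^*)$, uniformly in the configuration. This is the per-site probability summed over $C_n$, so the variation of $|C_n|$ that bothered you drops out. Hence the numbers $G_r$ of steps between consecutive resolutions are conditionally dominated by geometric variables adapted to $(\mathcal G_n)$. Then $N_{\rm ext}=\sum_{r\le R}G_r$, with $R$ the number of resolutions, and your Chernoff argument goes through with $|\mathcal T|$ replaced by $R$.

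\textbf{Problem with Step 1.} Step 1 imports Lemma \ref{lem:discrete_offspring_domination}, on which the paper's proof also rests, and that lemma fails as stated. From $C_0=\{0\}$, the first event that matters for the root is a boundary candidate with probability $2(\beta^*-\beta_*)/(\beta_*+2(\beta^*-\beta_*))$. This is strictly larger than $1-p_0=(\beta^*-\beta_*)/\beta^*$; at the baseline parameters it is $1/2$ versus $1/3$. So already $\PP(R\ge2)>\PP(\mathcal N\ge2)$. The lemma's proof confuses ``the next event is sure'' with ``the next event is sure at $j$''.

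\textbf{A direct proof.} The corollary is still true under $\delta>1$, by a short argument that bypasses both Step 1 and the vertex decomposition.
Let $k(C)$ be the number of maximal intervals of $C$ and $g_1(C)$ the number of gaps of length one between them. Set $V(C)=|C|+a\,k(C)$ with $(\beta^*-\beta_*)/\beta_*<a<1$; such an $a$ exists exactly when $\delta>1$.
Births merge two intervals only across gaps of length one, and deaths split an interval only at interior sites. A direct count then gives the continuous-time drift
\[
2k(\beta^*-\beta_*-a\beta_*)-(1-a)\beta_*|C|-(1+a)(\beta^*-\beta_*)g_1\le-(1-a)\beta_*|C|.
\]
Dividing by $\beta^*|S_n|\le3\beta^*|C_n|$, the process $V(C_n)$ has per-step conditional drift at most $-(1-a)\beta_*/(3\beta^*)$ and increments bounded by $1+a$. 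Moreover $V(C_n)\ge V(C_0)$ on $\{N_{\rm ext}>n\}$. Azuma--Hoeffding, applied to the stopped supermartingale $V(C_{n\wedge N_{\rm ext}})+(n\wedge N_{\rm ext})(1-a)\beta_*/(3\beta^*)$, then gives $\PP(N_{\rm ext}>n)\le e^{-cn}$ for some $c>0$.
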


\begin{proof}
The total progeny of a subcritical Galton-Watson process with geometric offspring distribution has an exponential moment in a neighbourhood of the origin. This follows, for example, from the fixed-point equation for the generating function of the total progeny. Since $N_{\rm ext}$ is stochastically dominated by a finite multiple of the total progeny in Lemma \ref{lem:discrete_offspring_domination}, the stated bound follows.
\end{proof}

\begin{rem}
The constants in Corollary \ref{cor:exp_tail} are not optimized. The result is used only to justify convergence in the coupling argument and finite expected simulation cost in compact subcritical regions.
\end{rem}

\section{Stationary construction and uniqueness}
\label{sec:stationarity}

\subsection{Forward evaluation of a finite clan}

Let $\mathscr J$ be the finite revealed list of the clan of a target point $(i,t)$. Sort its elements by increasing time:
\[
(r_1,j_1,\tau_1,u_1),\ldots,(r_M,j_M,\tau_M,u_M),
\qquad r_1<\cdots<r_M\leq t.
\]
For each site $k$ appearing in the list, maintain a finite set $P_k$ of accepted presynaptic events since the last accepted event of $k$.

The forward evaluation is defined recursively:
\begin{enumerate}[label=\rm(\roman*)]
\item if $\tau_m=s$, the event is accepted;
\item if $\tau_m=c$, compute
\[
x_m:=\sum_{(\ell,r)\in P_{j_m}} h(r_m-r)
\]
and accept the event if
\[
u_m\leq \frac{\beta(x_m)-\beta_*}{\beta^*-\beta_*};
\]
\item if the event is accepted, reset $P_{j_m}$ to $\varnothing$ and add $(j_m,r_m)$ to $P_{j_m-1}$ and $P_{j_m+1}$.
\end{enumerate}
At the end, define
\begin{equation}\label{eq:target_value_forward}
\Phi_{i,t}(\Pi):=\sum_{(\ell,r)\in P_i} h(t-r).
\end{equation}

\begin{lem}[Finite dependence]\label{lem:finite_dependence}
On the event that the clan of $(i,t)$ is finite, the value $\Phi_{i,t}(\Pi)$ defined by \eqref{eq:target_value_forward} is the unique value at $(i,t)$ compatible with the thinning rule and the revealed environment.
\end{lem}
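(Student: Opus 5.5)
The plan is to compare the forward evaluation with an arbitrary process compatible with the thinning rule, and to show by induction along the sorted list $(r_1,j_1,\tau_1,u_1),\ldots,(r_M,j_M,\tau_M,u_M)$ that both make the same acceptance decisions. Fix a point process $(N^k)_{k\in\Z}$ obtained by accepting a subset of the points of $\Pi$ according to the thinning rule \eqref{eq:candidate_accept}. Such a process is locally finite by Lemma \ref{lem:no_explosion_site}, so its membrane potentials \eqref{eq:X} are well defined. The claim to prove is that the value $X^i_t$ of this process equals $\Phi_{i,t}(\Pi)$. Uniqueness then follows, because any compatible process yields this same value.

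The key intermediate statement is an inductive invariant. For each $m\in\{1,\ldots,M\}$:
\begin{enumerate}[label=\rm(\alph*)]
\item the event $(r_m,j_m,\tau_m,u_m)$ is accepted by $N$ if and only if it is accepted by the forward evaluation;
\item just before time $r_m$, the set $P_{j_m}$ coincides with the set of accepted events of $N$ at $j_m\pm1$ in the interval $(L^{j_m}_{r_m},r_m)$.
\end{enumerate}
Point (b) gives $x_m=X^{j_m}_{r_m-}$. Point (a) then follows, since the acceptance rule $u_m\le p(x_m)$ is exactly \eqref{eq:candidate_accept}, and sure events are accepted by both.

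To prove (b), I will invoke Lemma \ref{lem:ancestral_closure}. Every event of $\Pi$ that can affect $X^{j_m}_{r_m-}$ lies in $\mathscr J$. These are the events at $j_m$ that fix the last reset $L^{j_m}_{r_m}$, together with the neighbour events in $(L^{j_m}_{r_m},r_m)$. Two consequences follow.
\begin{itemize}
\item The last accepted event of $j_m$ before $r_m$ is an earlier list element. By the induction hypothesis, its acceptance status agrees in the two constructions. So the last reset time computed forward equals $L^{j_m}_{r_m}$.
\item Accepted neighbour events after that reset are also list elements with matching status. So they were added to $P_{j_m}$ by rule (iii), and nothing was removed after the reset.
\end{itemize}
The final step applies the same argument at the target $(i,t)$, using the closure lemma for the target potential. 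This gives $X^i_t=\sum_{(\ell,r)\in P_i}h(t-r)=\Phi_{i,t}(\Pi)$.

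The main obstacle is boundary behaviour of the finite list. A site $k$ appearing in $\mathscr J$ may have had accepted events strictly before its first list element. The forward evaluation starts with $P_k=\varnothing$, so I must show that such a site's potential is never queried before a sure event in the list has reset it. I plan to handle this with the structure of the exploration in Definition \ref{def:backward}. A site only leaves $C_n$ via a sure event, and $C_{N_{\rm ext}}=\varnothing$. So every site whose potential is queried, namely a candidate site, which is active by the update rule, has an earlier sure event in $\mathscr J$. That sure event resets $P_k$ consistently in both constructions. A further point to check in the same way is that candidate events at non-active sites in $S_n$ are only queried once the site has become active. I will make this precise by reading the update rule \eqref{eq:update} forward in time, interpreting the exploration step at which a site is removed as the time of its reset.
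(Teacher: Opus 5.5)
Your proposal is correct and follows essentially the paper's argument: an induction along the chronologically sorted clan, with $P_k$ tracking the accepted presynaptic events since the last reset and the closure lemma ensuring that every event relevant to a queried potential lies in the list. Your explicit treatment of the initialization $P_k=\varnothing$ (a site hit by a candidate stays in $C_n$ until a revealed sure event) spells out what the paper leaves implicit in the phrase ``within the revealed ancestral list''. One small correction: state invariant (b) only for $\tau_m=c$, because for sure events $P_{j_m}$ need not equal the true presynaptic set, since a site's history before its first revealed reset is not tracked; this is harmless, because (a) is trivial for sure events.
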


\begin{proof}
By Lemma \ref{lem:ancestral_closure}, all events that can influence the target value or the acceptance status of an event influencing the target are contained in the revealed list. The forward recursion processes these events in chronological order. At every step, the set $P_k$ contains exactly the accepted presynaptic events of site $k$ occurring after its last accepted event within the revealed ancestral list. This assertion is proved by induction over the ordered list. It is true initially because no event has yet been processed. When a sure or accepted candidate event occurs at $k$, the reset of $k$ is represented by clearing $P_k$, and its contribution to neighbouring sites is represented by adding the event to their presynaptic lists. Candidate acceptance is decided using precisely the potential computed from $P_k$. Thus the recursion exactly reproduces the thinning rule on the finite ancestral list. The final expression is exactly the membrane potential at the target point.
\end{proof}

\subsection{Existence}

\begin{theo}[Existence of a stationary solution]\label{theo:existence}
Under Assumption \ref{ass:basic} and \eqref{eq:subcrit}, there exists a stationary solution to the reset Hawkes system.
\end{theo}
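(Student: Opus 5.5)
We construct the stationary solution directly as a measurable functional of the Poisson environment $\Pi$, using the finite clans provided by Theorem \ref{theo:extinction}.

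\textbf{Step 1: definition of $N$.} By Theorem \ref{theo:extinction}, for every fixed space-time point $(i,t)$ the clan $\mathcal A(i,t)$ is almost surely finite. For a candidate point $(t,c,u)$ of $\Pi^i$, run the backward exploration from $(i,t)$ using the left limit, that is, reveal only points strictly before $t$. Since $\Pi$ has countably many points and each deterministic target has a finite clan, we would apply the Palm/Mecke formula (equivalently, Slivnyak) to show that almost surely every point of $\Pi$ has a finite clan simultaneously. We then set
\[
N^i:=\{t:(t,s,u)\in\Pi^i\}\cup\{t:(t,c,u)\in\Pi^i,\ u\le p(\Phi_{i,t-}(\Pi))\},
\]
where $\Phi_{i,t-}$ is given by the forward evaluation \eqref{eq:target_value_forward} on the finite clan. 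Measurability follows because $\Phi$ is a measurable function of finitely many points, selected through the measurable stopping rule of Definition \ref{def:backward}.

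\textbf{Step 2: $N$ solves the system.} We must check that $X^i_{t}$, computed from this $N$ via \eqref{eq:X}, coincides with $\Phi_{i,t}(\Pi)$ at every $t$, so that acceptance follows \eqref{eq:candidate_accept}. Lemma \ref{lem:finite_dependence} together with the ancestral closure (Lemma \ref{lem:ancestral_closure}) gives the consistency needed here. If an event $e$ lies in the clan of $(i,t)$, then the clan of $e$ is contained in the clan of $(i,t)$, since the exploration from $e$ is a sub-exploration. Hence the acceptance decision made in the forward recursion for $(i,t)$ agrees with the global decision defining $N$. It then follows that $P_i$ equals the set of accepted neighbour events after $L^i_t$. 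Because $p(X^i_{t-})=(\beta(X^i_{t-})-\beta_*)/(\beta^*-\beta_*)$, the standard thinning argument shows that $N^i$ has stochastic intensity $\beta(X^i_{t-})$ with respect to the filtration generated by $\Pi$ restricted to $(-\infty,t]$. Local finiteness is given by Lemma \ref{lem:no_explosion_site}.

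\textbf{Step 3: stationarity.} The map $\Pi\mapsto N$ commutes with time shifts $\theta_s$, because the exploration and the forward evaluation depend only on relative positions of points. The law of $\Pi$ is invariant under $\theta_s$, so the law of $N$ is invariant as well. Spatial shift invariance follows in the same way.

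\textbf{Main obstacle.} The delicate point is the consistency in Step 2, namely that clans are nested and that decisions computed inside different clans agree. I would prove this by induction on clan size, using the exponential tail in Corollary \ref{cor:exp_tail} to guarantee that the induction is well founded. The induction runs over the finitely many revealed events in chronological order, and at each step we compare the exploration started from an event with the one started from the target.
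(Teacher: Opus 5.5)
Your proposal is correct and follows the paper's route: build the process as a measurable functional of $\Pi$ through finite clans and forward evaluation, get the intensity $\beta(X^i_{t-})$ by thinning, and get stationarity from shift covariance. You are in fact more careful than the paper, whose proof addresses neither simultaneous finiteness of clans at all (random) Poisson points nor agreement of acceptance decisions across nested clans. Two small corrections to your sketch. The nesting $\mathcal A(j,r)\subseteq\mathcal A(i,t)$ should be stated only for revealed candidate events, which is all you use: their site is active right after being revealed, and the exploration is monotone in the initial active set. Also, your induction is well founded simply because the clans are finite, so Corollary \ref{cor:exp_tail} is not needed there.
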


\begin{proof}
Let $\Pi$ be the two-sided marked Poisson environment. For every target point $(i,t)$, define
\[
X_t^i:=\Phi_{i,t}(\Pi),
\]
where $\Phi_{i,t}$ is obtained by backward exploration followed by forward evaluation. By Theorem \ref{theo:extinction}, the clan is finite almost surely, so $X_t^i$ is well defined for every fixed $(i,t)$.

The mapping $\Pi\mapsto X_t^i$ is measurable by Lemma \ref{lem:well_defined_step}, Lemma \ref{lem:ancestral_closure} and the finite forward recursion. The family is stationary under joint time shifts and spatial translations because the Poisson environment has these invariances and the exploration rule is covariant under them.

Finally define $N^i$ as the point process of accepted events at site $i$ under the same forward acceptance rule. Sure events are always accepted. Candidate events are accepted with probability determined by \eqref{eq:candidate_accept}. Therefore the stochastic intensity of $N^i$ is exactly $\beta(X_{t-}^i)$, and the system satisfies the thinning equation.
\end{proof}

\subsection{Uniqueness}

\begin{theo}[Uniqueness of the stationary law]\label{theo:uniqueness}
Under Assumption \ref{ass:basic} and \eqref{eq:subcrit}, the stationary law constructed in Theorem \ref{theo:existence} is the unique stationary law satisfying the thinning equation.
\end{theo}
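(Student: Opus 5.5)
The plan is to prove uniqueness by showing that any stationary solution, once realised on a suitable Poisson environment, coincides almost surely with the process $X=\Phi(\Pi)$ of Theorem \ref{theo:existence}. Let $(\tilde N^i)_{i\in\Z}$ be a stationary solution of the thinning equation, with potentials $\tilde X^i_t$ defined by \eqref{eq:X}. The first step is a representation step. Using the standard thinning representation for point processes with bounded stochastic intensity $\beta_*\le\tilde\lambda^i\le\beta^*$, I would construct, possibly on an enlarged probability space, a marked Poisson environment $\Pi$ with the law of Section \ref{sec:model} such that the following hold:
\begin{itemize}
\item $\tilde N^i$ is exactly the set of accepted points of $\Pi^i$ under rule \eqref{eq:candidate_accept} computed with $\tilde X^i_{t-}$;
\item the restriction of $\Pi$ to $(t,\infty)$ is independent of the history of $\tilde N$ up to $t$.
\end{itemize}
Concretely, each point of $\tilde N^i$ is marked as sure with probability $\beta_*/\tilde\lambda^i_t$ and as an accepted candidate otherwise, with $u$ uniform on $[0,p(\tilde X^i_{t-})]$. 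Rejected candidates are then added from an independent Poisson process of rate $\beta^*-\tilde\lambda^i_t$, with $u$ uniform on $(p(\tilde X^i_{t-}),1]$. A compensator computation shows that the resulting marked process is Poisson with the correct intensity.

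The second step is the coupling from the past. Fix a target $(i,t)$. By Theorem \ref{theo:extinction}, the clan $\mathcal A(i,t)$ computed from this $\Pi$ is almost surely finite. By Lemma \ref{lem:ancestral_closure}, every event that influences the acceptance decisions in the clan, or the target potential, lies in the clan. The sure events at which each lineage is killed are accepted in both $\tilde N$ and in the constructed process, so they reset the corresponding sites in both systems. Hence the post-reset histories feeding the clan start from the same empty state.

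I would then run an induction over the time-ordered clan list exactly as in Lemma \ref{lem:finite_dependence}. Both $\tilde N$ and the constructed process make the same acceptance decision at every revealed event, because those decisions depend only on earlier revealed events. It follows that $\tilde X^i_t=\Phi_{i,t}(\Pi)$ almost surely. Taking a countable dense set of times, for instance rational $t$ and all $i$, and using that paths are determined by the accepted events in each bounded window, both of which are a.s. finite by Lemma \ref{lem:no_explosion_site}, I get $\tilde N=N$ almost surely as point processes. Their laws therefore coincide with the law constructed in Theorem \ref{theo:existence}, which proves the claim.

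I expect the first step to be the main obstacle. One has to produce $\Pi$ with exactly the prescribed Poisson law while keeping $\tilde N$ as its thinning, and keep the future increments of $\Pi$ independent of the past so that Lemma \ref{lem:mark_distribution} and Theorem \ref{theo:extinction} apply to this environment. I would handle it with the Poisson embedding theorem for bounded intensities, as in Brémaud--Massoulié \cite{BM}, applied site by site on a common filtration. If that is delicate for infinitely many sites, I would fall back on a weaker route: show that the law of $(\tilde N^j)$ restricted to the finite window reached by the clan is determined, which suffices for the finite-dimensional distributions of the target potentials. The exponential tail of Corollary \ref{cor:exp_tail} ensures that these windows are a.s. finite with summable tails.
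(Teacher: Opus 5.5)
Your proposal is correct and is essentially the paper's argument: you drive the competing solution by the same Poisson environment and let the a.s.\ finite clan (Theorem \ref{theo:extinction}), together with Lemmas \ref{lem:ancestral_closure} and \ref{lem:finite_dependence}, force agreement at every target. Your Poisson-embedding step justifies the coupling that the paper only asserts, and your pathwise identification replaces its bound $2\|f\|_\infty\PP(B_T^c)\to 0$ and its appeal to stationarity to pass from time-zero marginals to multi-time laws; in fact your route never uses stationarity of $\tilde N$.

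For the final step, deduce $\tilde N=N$ from the agreement of acceptance decisions, not from equality of potentials at rational times, which alone does not determine the point process. Each atom of $\Pi^i$ at time $r$ is the first event revealed in the clan of $(i,q)$ for any rational $q>r$ preceding the next atom at sites $i-1,i,i+1$, so almost surely every atom is decided identically in both systems.
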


\begin{proof}
Let $\nu$ be another stationary law satisfying the thinning equation. Couple a process with law $\nu$ and the canonical construction of Theorem \ref{theo:existence} by using the same Poisson environment on a finite time window $[-T,0]$.

Fix a finite set of target sites $A\subset\Z$ and a bounded measurable function $f$ depending on $(X_0^i)_{i\in A}$. Let $B_T$ be the event that the union of the clans of all target points $(i,0)$, $i\in A$, is contained in $[-T,0]$. By Theorem \ref{theo:extinction},
\[
\PP(B_T^c)\longrightarrow 0.
\]
On $B_T$, Lemma~\ref{lem:finite_dependence} implies that the vector $(X_0^i)_{i\in A}$ is a measurable function only of the common environment on $[-T,0]$. Therefore the two coupled processes agree on this vector on $B_T$. Consequently,
\[
\left|\E_\nu f((X_0^i)_{i\in A}) - \E f((\widetilde X_0^i)_{i\in A})\right|
\leq 2\|f\|_\infty\PP(B_T^c),
\]
where $\widetilde X$ denotes the canonical stationary process. Letting $T\to\infty$ gives equality of finite-dimensional distributions at time zero. By stationarity, equality holds for arbitrary finite collections of space-time points. Hence the stationary law is unique.
\end{proof}

\section{Perfect simulation}
\label{sec:perfect}

\subsection{Backward-forward algorithm}

The preceding construction gives an exact simulation algorithm for the stationary value at a target point. We state it for $(0,0)$.

\begin{algorithm}[t]
\caption{Backward exploration for the target $(0,0)$}
\label{alg:backward_v2}
\begin{algorithmic}[1]
\State $C\leftarrow\{0\}$; $t\leftarrow 0$; $J\leftarrow\emptyset$
\While{$C\neq\emptyset$}
  \State $S\leftarrow C\cup\partial C$
  \State Generate the latest Poisson event before $t$ among sites in $S$
  \State Denote this event by $(j,r,\tau,u)$
  \State Append $(j,r,\tau,u)$ to $J$
  \If{$\tau=s$ and $j\in C$}
     \State $C\leftarrow C\setminus\{j\}$
  \ElsIf{$\tau=c$ and $j\in\partial C$}
     \State $C\leftarrow C\cup\{j\}$
  \EndIf
  \State $t\leftarrow r$
\EndWhile
\State \Return $J$
\end{algorithmic}
\end{algorithm}

\begin{algorithm}[t]
\caption{Forward evaluation}
\label{alg:forward_v2}
\begin{algorithmic}[1]
\State Sort $J$ by increasing time
\State Initialise $P_k\leftarrow\emptyset$ for all sites appearing in $J$ and their neighbours
\For{each event $(j,r,\tau,u)$ in chronological order}
  \If{$\tau=s$}
     \State $A\leftarrow 1$
  \Else
     \State $x\leftarrow \sum_{(\ell,q)\in P_j} h(r-q)$
     \State $A\leftarrow \ind\left\{u\leq \frac{\beta(x)-\beta_*}{\beta^*-\beta_*}\right\}$
  \EndIf
  \If{$A=1$}
     \State $P_j\leftarrow\emptyset$
     \State $P_{j-1}\leftarrow P_{j-1}\cup\{(j,r)\}$
     \State $P_{j+1}\leftarrow P_{j+1}\cup\{(j,r)\}$
  \EndIf
\EndFor
\State \Return $\sum_{(\ell,q)\in P_0}h(-q)$
\end{algorithmic}
\end{algorithm}

\subsection{Correctness theorem}

\begin{theo}[Perfect simulation]\label{theo:perfect}
Under Assumption \ref{ass:basic} and \eqref{eq:subcrit}, Algorithms \ref{alg:backward_v2} and \ref{alg:forward_v2} terminate almost surely and return an exact sample from the stationary distribution of $X_0^0$.
\end{theo}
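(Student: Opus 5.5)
The proof has two parts: almost sure termination, and exactness of the returned value. I would show that the two algorithms are a literal implementation of the backward exploration of Definition~\ref{def:backward} and of the forward evaluation defining $\Phi_{0,0}$. Everything then reduces to Theorem~\ref{theo:extinction}, Lemma~\ref{lem:finite_dependence} and the construction of Theorem~\ref{theo:existence}.

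\textbf{Step 1: distributional identification of the backward loop.} The loop of Algorithm~\ref{alg:backward_v2} never samples the whole environment. Instead, given the current $C$ and time $t$, it samples "the latest Poisson event before $t$ among sites in $S$". I would make this precise as follows. Draw an exponential gap of rate $|S|\beta^*$. Draw the site uniformly from $S$. Draw the type $s$ with probability $p_0$ and $c$ otherwise, and draw an independent uniform mark $u$. By Lemma~\ref{lem:well_defined_step} and Lemma~\ref{lem:mark_distribution}, these are exactly the conditional laws of $(T_{n+1},J_{n+1},\tau_{n+1},U_{n+1})$ given $\mathcal G_n$. This relies on the strong Markov property and independent increments of $\Pi$ on the unexplored region $S_n\times(-\infty,T_n)$.

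\textbf{Main obstacle.} The unexplored region is not simply a product set. Sites newly added to $S$ have not been explored on $[T_n,0)$, but this part is never revisited, since the exploration only moves further into the past. Sites that left $S$ and later re-enter have a past below their last exploration time that is still fresh, because earlier reveals only concerned times above it. I would formalize this by noting that the explored region at step $n$ is a union of sets of the form $\{j\}\times[T_k,T_{k-1})$ with $k\le n$, all lying above $T_n$. Hence the restriction of $\Pi$ to $\Z\times(-\infty,T_n)$ is independent of $\mathcal G_n$, and $T_n$ is a stopping time for the backward filtration. By induction, the sequence generated by the algorithm has the same joint law as $(C_n,T_n,\mathscr J_n)_{n\ge0}$.

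\textbf{Step 2: termination.} Algorithm~\ref{alg:backward_v2} halts exactly at step $N_{\rm ext}$. By Theorem~\ref{theo:extinction}, under \eqref{eq:subcrit} we have $N_{\rm ext}<\infty$ almost surely. Corollary~\ref{cor:exp_tail} moreover gives an exponential tail, so the expected number of iterations is finite. Algorithm~\ref{alg:forward_v2} then processes a finite list and terminates trivially.

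\textbf{Step 3: exactness.} Algorithm~\ref{alg:forward_v2} is line by line the recursion (i)--(iii) defining $\Phi_{0,0}$ in \eqref{eq:target_value_forward}, so its output equals $\Phi_{0,0}(\mathscr J_{N_{\rm ext}})$. By Lemma~\ref{lem:finite_dependence} and Lemma~\ref{lem:ancestral_closure}, this quantity depends on $\Pi$ only through the revealed list and coincides with $X_0^0=\Phi_{0,0}(\Pi)$ of Theorem~\ref{theo:existence}. Combined with Step 1, the law of the output is the law of $\Phi_{0,0}(\Pi)$ under the two-sided Poisson environment. By Theorems~\ref{theo:existence} and \ref{theo:uniqueness}, this is the stationary law of $X_0^0$.
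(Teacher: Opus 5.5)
Your proposal is correct and follows the paper's route. You identify Algorithm~\ref{alg:backward_v2} with the exploration of Definition~\ref{def:backward}, take termination from Theorem~\ref{theo:extinction}, and take exactness from Lemma~\ref{lem:finite_dependence} applied to the forward recursion that defines $X_0^0=\Phi_{0,0}(\Pi)$. Your Step~1 (on-the-fly sampling of exponential gaps, uniform sites and marks reproduces the joint law of $(C_n,T_n,\mathscr J_n)$ via the strong Markov property at the backward stopping time $T_n$) and your appeal to Theorem~\ref{theo:uniqueness} are careful additions the paper leaves implicit; just read \emph{independent of $\mathcal G_n$} as the conditional, time-shifted Poisson property of $\Pi$ below $T_n$, since $T_n$ is itself $\mathcal G_n$-measurable.
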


\begin{proof}
The backward algorithm is precisely the exploration of Definition \ref{def:backward}. Its almost-sure termination follows from Theorem \ref{theo:extinction}. Conditional on termination, the returned list is exactly the finite clan of ancestors of $(0,0)$. The forward algorithm is exactly the finite recursion described in Section \ref{sec:stationarity}. By Lemma \ref{lem:finite_dependence}, its output equals the stationary value $X_0^0$ constructed from the two-sided Poisson environment. Hence the output has exactly the stationary law.
\end{proof}

\begin{cor}[Exact joint sampling]\label{cor:joint}
For any finite set $A\subset\Z$, starting the backward exploration with $C_0=A$ and applying the same forward evaluation yields an exact sample of $(X_0^i)_{i\in A}$ under the stationary law.
\end{cor}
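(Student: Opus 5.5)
We follow the proof of Theorem \ref{theo:perfect}, replacing the single target $(0,0)$ by the finite set $A$. The plan has three steps: extend the extinction theorem to $C_0=A$, check that the clan is closed under dependence, and identify the forward output with the stationary values.

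\textbf{Step 1: extinction from $C_0=A$.} The exploration of Definition \ref{def:backward} makes sense verbatim with $C_0=A$, and Lemmas \ref{lem:well_defined_step} and \ref{lem:mark_distribution} go through unchanged, since they only use that $S_n$ is finite and non-empty. The genealogical construction of Lemma \ref{lem:discrete_offspring_domination} now yields a forest with $|A|$ roots, one for each $i\in A$. Each vertex still has offspring stochastically dominated by the geometric law $\xi$, uniformly in the adaptive past.

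We run the Ulam--Harris coupling separately under $|A|$ disjoint roots $\varnothing_1,\dots,\varnothing_{|A|}$. This dominates the number of activated sites by the sum of $|A|$ independent total progenies of a subcritical Galton--Watson process. That sum is almost surely finite under \eqref{eq:subcrit}. The argument of Theorem \ref{theo:extinction} then gives $N_{\rm ext}<\infty$ and $T_{N_{\rm ext}}>-\infty$ almost surely, and hence termination of the modified Algorithm \ref{alg:backward_v2}.

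\textbf{Step 2: closure for the joint clan.} We need that the returned list $\mathscr J$ is the union of what is needed for every target in $A$. The proof of Lemma \ref{lem:ancestral_closure} is an induction on the active set. It never used that $C_0$ is a singleton: sure resets at active sites screen earlier history, and events outside $S_n$ cannot influence active sites. So it applies verbatim. Consequently every event affecting $X^i_0$ for some $i\in A$, or affecting the acceptance of a revealed event, lies in $\mathscr J$.

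One point needs care. The joint exploration may reveal a different set of events than the union of the individual clans $\mathcal A(i,0)$, because boundary candidate events may activate sites earlier than in the separate explorations. This is harmless: the closure property holds for any list produced by the rule, and the forward recursion is insensitive to extra revealed events whose acceptance is correctly decided.

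\textbf{Step 3: identification with the stationary process.} Applying the forward evaluation to $\mathscr J$ and returning $\sum_{(\ell,q)\in P_i}h(-q)$ for each $i\in A$ gives, by the induction in Lemma \ref{lem:finite_dependence}, the unique values compatible with the thinning rule. These values therefore coincide with $\Phi_{i,0}(\Pi)=X_0^i$ from the construction of Theorem \ref{theo:existence}, simultaneously for all $i\in A$ and on the same environment $\Pi$. The output vector is thus a measurable function of $\Pi$ equal to $(X_0^i)_{i\in A}$ almost surely. Its law is the joint marginal of the stationary law, which is unique by Theorem \ref{theo:uniqueness}.

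The main obstacle is the consistency point raised in Step 2. The cleanest way through is to observe that, on the joint list, the forward recursion started from each $i\in A$ satisfies the hypotheses of Lemma \ref{lem:finite_dependence}, since that lemma only requires closure of the list. Uniqueness of the compatible value then forces agreement with each single-target computation.
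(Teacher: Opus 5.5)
Your proposal is correct and follows the paper's proof: run the exploration from $C_0=A$, get finiteness of the joint clan from the same branching domination (your forest with $|A|$ roots spells this out), and apply Lemma~\ref{lem:finite_dependence} to all targets on the single revealed list. The caveat in your Step~2 is unnecessary, because the update rule \eqref{eq:update} and the map $C\mapsto S(C)$ both commute with unions, so at every backward time the joint active set is the union of the single-target active sets and the joint clan is exactly $\bigcup_{i\in A}\mathcal A(i,0)$, as the paper asserts; this also gives Step~1 directly from Theorem~\ref{theo:extinction} applied to each $(i,0)$.
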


\begin{proof}
Start the backward exploration from the active set $C_0=A$ and reveal the union of the ancestral dependencies needed to determine all target values $(X_0^i)_{i\in A}$. By the same domination argument as in Theorem \ref{theo:extinction}, this joint clan is finite almost surely under \eqref{eq:subcrit}.

The forward evaluation is then performed once on the single revealed list, sorted in chronological order. If the ancestral clans of two target sites overlap, there is no ambiguity: the same Poisson environment and the same marks are used for the common events, and each acceptance decision is made only once from the common past reconstructed by the forward recursion. Hence overlapping parts of the clans cannot lead to contradictory decisions. Lemma \ref{lem:finite_dependence} therefore applies simultaneously to all targets in $A$, and the output has exactly the joint stationary law of $(X_0^i)_{i\in A}$.
\end{proof}

\section{Elementary structural properties}
\label{sec:properties}

\begin{prop}[Atom at zero]\label{prop:atom_zero_v2}
Under the stationary law,
\[
\PP(X_0^0=0)>0.
\]
\end{prop}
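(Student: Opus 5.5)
We exhibit an explicit event of positive probability, defined in terms of the Poisson environment, on which the stationary value $X_0^0=\Phi_{0,0}(\Pi)$ vanishes. By Theorem \ref{theo:existence} and Lemma \ref{lem:finite_dependence}, $X_0^0$ is the sum in \eqref{eq:target_value_forward} over accepted presynaptic events of site $0$ occurring after the last accepted event of site $0$ before time $0$. Since $h\geq 0$, it is enough to ensure that no accepted event at sites $\pm1$ occurs in $(L_0^0,0)$. Without further assumptions on $h$ we cannot use $h=0$ on some region, so we force the set $P_0$ to be empty at the end of the forward recursion.

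\textbf{The event.} Consider the backward exploration of Definition \ref{def:backward} started at $(0,0)$, and let $E$ be the event that the first revealed event is a sure event at site $0$:
\[
E:=\{\tau_1=s,\ J_1=0\}.
\]
Initially $S_0=\{-1,0,1\}$, and the superposition of environment points on these three sites is a Poisson process of rate $3\beta^*$. The last point before $0$ is at site $0$ and of sure type with probability $\beta_*/(3\beta^*)>0$, by the same competition-of-exponential-clocks argument as in Lemma \ref{lem:mark_distribution}. So $\PP(E)=\beta_*/(3\beta^*)>0$. On $E$ the update \eqref{eq:update} gives $C_1=\varnothing$, hence $N_{\rm ext}=1$ and the clan is $\mathscr J=\{(0,T_1,s,U_1)\}$.

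\textbf{Forward evaluation.} On $E$, the forward recursion processes the single sure event at site $0$. It accepts the event, resets $P_0$ to $\varnothing$, and adds $(0,T_1)$ to $P_{\pm1}$ only. At the end $P_0=\varnothing$, so $\Phi_{0,0}(\Pi)=0$. Equivalently, directly from the model, site $0$ fires at time $T_1$, so $L_0^0=T_1$, and no environment point at all lies at sites $\pm1$ in $(T_1,0)$. Hence no accepted presynaptic event enters the integral in \eqref{eq:X}, and $X_0^0=0$ regardless of the acceptance decisions elsewhere.

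\textbf{Conclusion and main subtlety.} We get $\PP(X_0^0=0)\geq \PP(E)=\beta_*/(3\beta^*)>0$. The only delicate point is to justify that the value produced by the construction coincides with the stationary $X_0^0$ on $E$. This is exactly Lemma \ref{lem:finite_dependence} together with the uniqueness Theorem \ref{theo:uniqueness}. Alternatively, the direct argument in the previous paragraph shows that any solution of the thinning equation driven by $\Pi$ has $X_0^0=0$ on $E$, since it uses only that sure events are always accepted and that accepted events are environment points (Lemma \ref{lem:no_explosion_site}).
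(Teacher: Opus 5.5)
Your proof is correct and rests on the same mechanism as the paper's: a sure (hence always accepted) event resets site $0$, and no environment point at sites $\pm1$ then contributes to the potential, so $X_0^0=0$ for any solution driven by $\Pi$. The difference is bookkeeping: the paper fixes a window $(-a,0)$ and uses independence of the site processes to get the bound $(1-e^{-\beta_* a})e^{-2\beta^* a}$, whereas you take the event that the first backward-exploration step is a sure point at $0$ (a one-point clan), which gives the explicit bound $\beta_*/(3\beta^*)$.
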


\begin{proof}
Fix $a>0$. Consider the event that site $0$ has a sure event at some time $\sigma\in(-a,0)$ and that neither neighbouring site $-1$ nor $1$ has any accepted event in $(\sigma,0)$. The first event has probability $1-e^{-\beta_*a}>0$. Conditionally on $\sigma$, accepted events at each neighbouring site are dominated by a Poisson process of rate $\beta^*$, so the probability of no accepted neighbouring event in $(\sigma,0)$ is at least $e^{-2\beta^*a}>0$. Hence the intersection has positive probability.

On this event, site $0$ has been reset at time $\sigma$ and receives no accepted presynaptic spike afterwards. Therefore $X_0^0=0$.
\end{proof}

\begin{rem}
The result shows that the reset mechanism leaves a visible trace in stationarity: the membrane potential has a non-trivial atom at zero. We deliberately do not claim monotonicity in parameters without a separate comparison theorem.
\end{rem}

\section{Numerical experiments}
\label{sec:numerics}

This section illustrates the theoretical construction and the behaviour predicted by the branching domination. The numerical experiments are not used in the proofs. They are included for three purposes only: to visualise the distribution of the dominating genealogical process, to show the behaviour of the stationary membrane potential under changes in the memory-decay parameter when the full perfect sampler is implemented, and to document the increase in computational cost near the subcriticality threshold.

\subsection{Parametric specification}

We use the following parametric family:
\begin{equation}\label{eq:numerical_beta_h}
\beta(x)=\frac{\beta^*+x\beta_*}{1+x},
\qquad
h(t)=\mu e^{-\mu t},\quad t\geq 0.
\end{equation}
The function $\beta$ is non-increasing, satisfies $\beta(0)=\beta^*$ and converges to $\beta_*$ as $x\to\infty$. The exponential kernel is used only for numerical convenience; the theoretical results do not require this specific form.

Unless otherwise stated, the baseline parameters are
\begin{equation}\label{eq:numerical_baseline}
\beta^*=3,
\qquad
\beta_*=2,
\qquad
\delta=\frac{\beta_*}{\beta^*-\beta_*}=2,
\end{equation}
so that the subcriticality condition \eqref{eq:subcrit} holds. The first and third experiments are based on the Galton-Watson process that dominates the clan in Lemma \ref{lem:discrete_offspring_domination}. They should therefore be read as numerical illustrations of the theoretical upper bound, not as direct simulations of the full Poisson backward exploration. The second experiment, concerning the stationary membrane potential, requires the full backward--forward sampler described in Algorithms~\ref{alg:backward_v2} and \ref{alg:forward_v2}.

All simulations were performed in \texttt{R}. Unless otherwise stated, each experiment is based on $M=10^4$ independent replications with random seed 12345. Confidence intervals displayed in the numerical figures are empirical 95\% intervals obtained by nonparametric bootstrap. The code reported in Appendix \ref{app:simulation_code} gives the reproducible implementation of the dominating Galton-Watson benchmark used for Figures \ref{fig:clan_size} and \ref{fig:complexity_delta}.

\subsection{Distribution of the backward exploration size}

The first quantity of interest is the total progeny of the Galton-Watson process that dominates the number of active-site resolutions in the backward exploration. This is not the full Poisson exploration itself, but the dominating object used in the proof of Theorem \ref{theo:extinction} and Corollary \ref{cor:exp_tail}.

Figure \ref{fig:clan_size} displays the empirical distribution of the total progeny of the dominating Galton-Watson process under the baseline parameters. The right panel presents the same distribution on a logarithmic scale. The purpose of the figure is not to estimate the optimal tail exponent, but to verify that the simulated exploration behaves consistently with the finite-clan regime predicted by the theory.

\begin{figure}[t]
\centering
\begin{minipage}{0.47\textwidth}
\centering
\includegraphics[width=\textwidth]{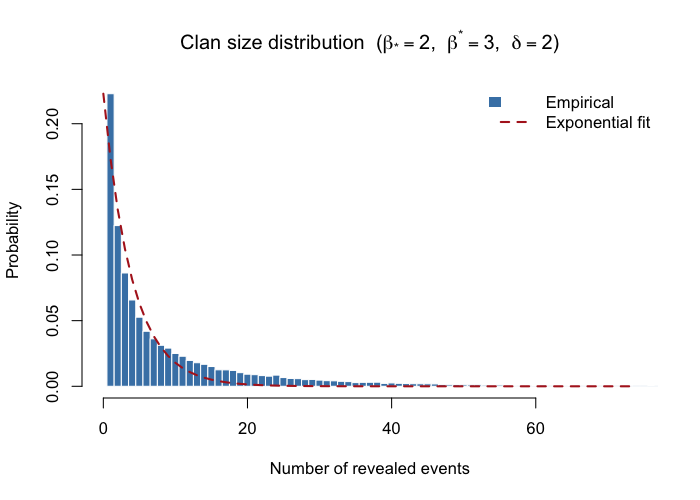}
\end{minipage}
\hfill
\begin{minipage}{0.47\textwidth}
\centering
\includegraphics[width=\textwidth]{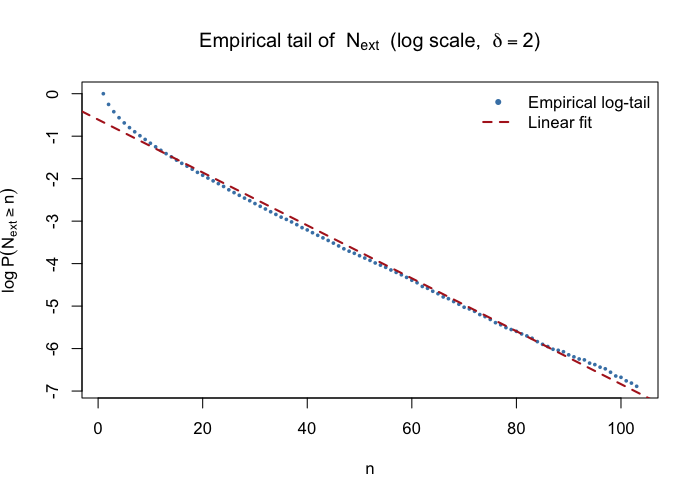}
\end{minipage}
\caption{Empirical distribution of the total progeny of the dominating Galton-Watson process. Left: histogram. Right: empirical tail on a logarithmic scale. The behaviour is consistent with the exponential-tail estimate of Corollary \ref{cor:exp_tail}.}
\label{fig:clan_size}
\end{figure}

\subsection{Stationary membrane potential and memory decay}

We next consider the stationary membrane potential $X_0^0$ obtained by the exact backward-forward algorithm. For the exponential kernel in \eqref{eq:numerical_beta_h}, the parameter $\mu$ controls how fast past presynaptic spikes are discounted. Larger values of $\mu$ correspond to shorter memory.

Figure \ref{fig:stationary_potential} shows empirical distribution functions of $X_0^0$ for several values of $\mu$ in the subcritical regime. The curves illustrate the intuitive effect of the memory-decay parameter: when $\mu$ is larger, older presynaptic inputs are attenuated more rapidly, and the simulated distribution places more mass near zero. This observation is presented as an illustration only. We do not claim a stochastic monotonicity theorem in $\mu$.

\begin{figure}[t]
\centering
\includegraphics[width=0.70\textwidth]{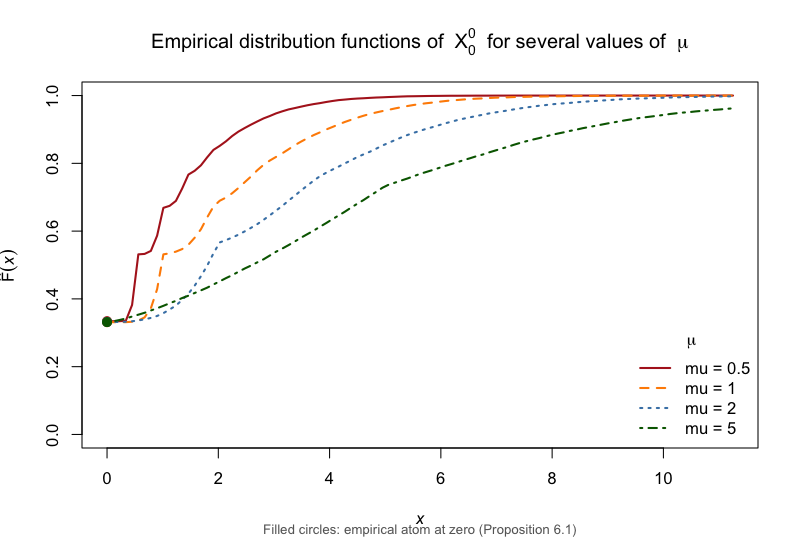}
\caption{Empirical distribution functions of the stationary membrane potential $X_0^0$ for different values of the exponential decay parameter $\mu$. The figure illustrates the effect of memory decay in the exact stationary samples. No stochastic monotonicity statement is asserted.}
\label{fig:stationary_potential}
\end{figure}

\subsection{Computational cost near the threshold}

Finally, we examine the average total progeny of the dominating Galton-Watson process as a function of
\[
\delta=\frac{\beta_*}{\beta^*-\beta_*}.
\]
The theory ensures almost-sure termination when $\delta>1$. As $\delta$ approaches the threshold from above, the mean offspring in the dominating branching process approaches one, and the backward exploration is expected to become more costly.

Figure \ref{fig:complexity_delta} reports the empirical mean total progeny of the dominating process for values of $\delta$ larger than one. The plot is consistent with the branching-process interpretation of the proof: the closer the parameter is to the threshold, the larger the observed exploration size. This figure documents the practical computational behaviour of the exact sampler in the regime where the theoretical condition applies.

\begin{figure}[t]
\centering
\includegraphics[width=0.70\textwidth]{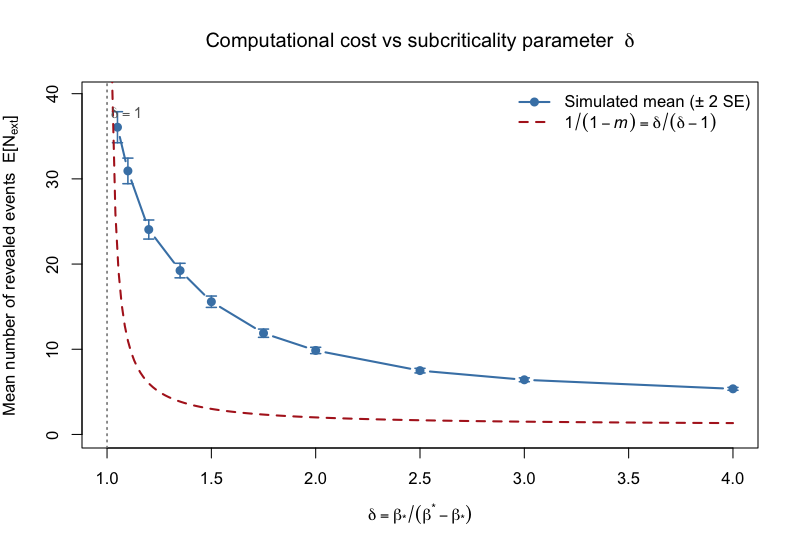}
\caption{Empirical mean total progeny of the dominating Galton-Watson process as a function of $\delta=\beta_*/(\beta^*-\beta_*)$, restricted to the subcritical side $\delta>1$. The vertical dashed line marks the theoretical threshold. The increase near the threshold is consistent with the branching-process domination used in the proof of Theorem \ref{theo:extinction}.}
\label{fig:complexity_delta}
\end{figure}

\begin{rem}
The numerical section is deliberately limited to three figures. The first one illustrates the finite-clan mechanism, the second one illustrates the stationary output of the exact sampler, and the third one documents the computational cost near the theoretical threshold.
\end{rem}

\section{Conclusion}

We developed a graphical construction and a perfect simulation algorithm for interacting Hawkes processes with reset-induced variable-length memory. The central object of the analysis is the clan of ancestors, which provides a finite representation of the dependencies determining a given space-time point under an explicit subcriticality condition.

The present work extends earlier perfect simulation constructions for interacting neuronal systems developed in \cite{Gon}. While those models admit a Markovian formulation, the Hawkes dynamics considered here lead to an intrinsically non-Markovian system, in which the relevant past is determined by random reset times. This transition fundamentally modifies the analytical framework: the state of the system is no longer finite-dimensional, and the backward exploration must handle stochastic memory horizons.

The extinction of the clan relies on a domination by a subcritical branching process, a classical mechanism in the study of interacting systems with random genealogies; see Athreya and Ney \cite{AN}. This provides a constructive route to both existence and uniqueness of the stationary regime, as well as an exact simulation procedure.

Several directions remain open. A first question is to refine the subcriticality condition and identify optimal thresholds. A second direction concerns quantitative properties of the stationary regime, such as mixing rates or concentration inequalities. Finally, extending the approach to more general interaction graphs or non-homogeneous settings would broaden the applicability of the method.

\bigskip
\paragraph{Acknowledgements.}
The authors thank Eva L\"ocherbach for insightful discussions on Hawkes processes and interacting systems with memory.

\appendix

\section{Simulation details and R code}
\label{app:simulation_code}

This appendix reports the core R implementation used for the Galton-Watson benchmark in Figures \ref{fig:clan_size} and \ref{fig:complexity_delta}. This code does not simulate the full Poisson backward exploration of Algorithm \ref{alg:backward_v2}; it simulates the dominating branching process appearing in Lemma \ref{lem:discrete_offspring_domination}. The distinction is important: the benchmark illustrates the theoretical upper bound on the clan genealogy, whereas the exact sampler is the backward-forward algorithm described in Section \ref{sec:perfect}.

\begin{verbatim}
simulate_dominating_gw <- function(p0, max_iter = 100000) {
  active <- c(0)
  total_progeny <- 1
  revealed <- 0

  while (length(active) > 0 && revealed < max_iter) {
    j <- active[1]
    active <- active[-1]

    repeat {
      revealed <- revealed + 1
      if (runif(1) < p0) {
        break
      } else {
        total_progeny <- total_progeny + 1
        active <- c(active, total_progeny)
      }
      if (revealed >= max_iter) {
        warning("Maximum number of iterations reached.")
        break
      }
    }
  }

  return(list(
    total_progeny = total_progeny,
    revealed_events = revealed
  ))
}

set.seed(12345)
M <- 10000
beta_star <- 2
beta_upper <- 3
p0 <- beta_star / beta_upper

sim <- replicate(M, simulate_dominating_gw(p0), simplify = FALSE)

total_progeny  <- sapply(sim, function(x) x$total_progeny)
revealed_events <- sapply(sim, function(x) x$revealed_events)

summary(total_progeny)
summary(revealed_events)

# Bootstrap 95\% sur la moyenne de total_progeny
B <- 1000
boot_means <- replicate(B, mean(sample(total_progeny, M, replace = TRUE)))
ci_lower <- quantile(boot_means, 0.025)
ci_upper <- quantile(boot_means, 0.975)
cat("IC bootstrap 95\% sur la moyenne :",
    round(ci_lower, 4), "-", round(ci_upper, 4), "\n")
\end{verbatim}

For the sensitivity analysis in Figure \ref{fig:complexity_delta}, the same routine is repeated over a grid of values of
\[
\delta=\frac{\beta_*}{\beta^*-\beta_*}>1.
\]
For each value of $\delta$, the empirical mean total progeny is computed over $10^4$ independent replications.

The numerical experiments are illustrative only. They are not used in the proofs of the theoretical results.


\begin{thebibliography}{99}

\bibitem{AN}
Athreya, K.\,B. and Ney, P.\,E. (1972).
\textit{Branching Processes}.
Springer, Berlin.

\bibitem{BM}
P. Br\'emaud and L. Massouli\'e.
\newblock Stability of nonlinear Hawkes processes.
\newblock \emph{Ann. Probab.} \textbf{24} (1996), 1563--1588.

\bibitem{CFF}
F. Comets, R. Fernandez and P.A. Ferrari.
\newblock Processes with long memory: regenerative construction and perfect simulation.
\newblock \emph{Ann. Appl. Probab.} \textbf{12} (2002), 921--943.

\bibitem{DFH}
S. Delattre, N. Fournier and M. Hoffmann.
\newblock Hawkes processes on large networks.
\newblock \emph{Ann. Appl. Probab.} \textbf{26} (2016), 216--261.

\bibitem{FGGL}
P. A. Ferrari, A. Galves, I. Grigorescu and E. L\"ocherbach.
\newblock Phase transition for infinite systems of spiking neurons.
\newblock \emph{J. Stat. Phys.} \textbf{172} (2018), 1564--1575.

\bibitem{GL}
A. Galves and E. L\"ocherbach.
\newblock Infinite systems of interacting chains with memory of variable length.
\newblock \emph{J. Stat. Phys.} \textbf{151} (2013), 896--921.

\bibitem{GLO}
A. Galves, E. L\"ocherbach and E. Orlandi.
\newblock Perfect simulation of infinite range Gibbs measures and coupling with their finite range approximations.
\newblock \emph{J. Stat. Phys.} \textbf{138} (2010), 1077--1103.

\bibitem{Gon}
Goncalves, B.\,P.\,I. (2023).
An interacting neuronal network with inhibition: theoretical analysis and perfect simulation.
\textit{MathematicS In Action}, Maths Bio, \textbf{12}(1), 3--22.
\texttt{doi:10.5802/msia.29}.

\bibitem{Hawkes}
A. G. Hawkes.
\newblock Point spectra of some mutually exciting point processes.
\newblock \emph{J. Roy. Statist. Soc. Ser. B} \textbf{33} (1971), 438--443.

\bibitem{HL}
P. Hodara and E. L\"ocherbach.
\newblock Hawkes processes with variable length memory and an infinite number of components.
\newblock \emph{Adv. in Appl. Probab.} \textbf{49} (2017), 84--107.

\bibitem{Mas}
L. Massouli\'e.
\newblock Stability results for a general class of interacting point processes dynamics.
\newblock \emph{Stochastic Process. Appl.} \textbf{75} (1998), 1--30.

\bibitem{PW}
J. G. Propp and D. B. Wilson.
\newblock Exact sampling with coupled Markov chains and applications to statistical mechanics.
\newblock \emph{Random Structures Algorithms} \textbf{9} (1996), 223--252.

\end{thebibliography}
\end{document}